\theoremstyle{plain}
\newtheorem{theorem}{Theorem}[section]
\newtheorem{thm}[theorem]{Theorem}
\newtheorem{lem}{Lemma}[section]
\newtheorem{cor}{Corollary}[section]
\newtheorem{prop}{Proposition}[section]
\theoremstyle{definition}
\newtheorem{defn}{Definition}[section]
\newtheorem{ex}{Example}
\newtheorem{rem}{\textnormal{\textbf{Remark}}}
\newcommand{\norm}[1]{\|{#1}\|}
\newcommand{\ud}{\mathrm{d}}
\newcommand{\bbR}{{\mathbb{R}}}
\newcommand{\bbN}{{\mathbb{N}}}
\def \R {{\mathbb {R}}}
\newcommand{\bbG}{\mathbb{G}}
\newcommand{\ep}{\varepsilon}
\newcommand{\average}{{\mathchoice {\kern1ex\vcenter{\hrule height.4pt
width 6pt
depth0pt} \kern-9.7pt} {\kern1ex\vcenter{\hrule height.4pt width 4.3pt
depth0pt}
\kern-7pt} {} {} }}
\newcommand{\calB} {\mathcal{B}}
\newcommand{\Om}{\Omega}
\newcommand{\Hei}{{\mathbb{H}_1}}
\newcommand{\G}{{\mathbb{G}}}
\def\vint_#1{\mathchoice%
          {\mathop{\kern 0.2em\vrule width 0.6em height 0.69678ex depth -0.58065ex
                  \kern -0.8em \intop}\nolimits_{\kern -0.4em#1}}%
          {\mathop{\kern 0.1em\vrule width 0.5em height 0.69678ex depth -0.60387ex
                  \kern -0.6em \intop}\nolimits_{#1}}%
          {\mathop{\kern 0.1em\vrule width 0.5em height 0.69678ex
              depth -0.60387ex
                  \kern -0.6em \intop}\nolimits_{#1}}%
          {\mathop{\kern 0.1em\vrule width 0.5em height 0.69678ex depth -0.60387ex
                  \kern -0.6em \intop}\nolimits_{#1}}}
\def\vintslides_#1{\mathchoice%
          {\mathop{\kern 0.1em\vrule width 0.5em height 0.697ex depth -0.581ex
                  \kern -0.6em \intop}\nolimits_{\kern -0.4em#1}}%
          {\mathop{\kern 0.1em\vrule width 0.3em height 0.697ex depth -0.604ex
                  \kern -0.4em \intop}\nolimits_{#1}}%
          {\mathop{\kern 0.1em\vrule width 0.3em height 0.697ex depth -0.604ex
                  \kern -0.4em \intop}\nolimits_{#1}}%
          {\mathop{\kern 0.1em\vrule width 0.3em height 0.697ex depth -0.604ex
                  \kern -0.4em \intop}\nolimits_{#1}}}
\newcommand{\kintint}[2]{\mathchoice%
          {\mathop{\kern 0.2em\vrule width 0.6em height 0.69678ex depth -0.58065ex
                  \kern -0.8em \intop}\nolimits_{\kern -0.45em#1}^{#2}}%
          {\mathop{\kern 0.1em\vrule width 0.5em height 0.69678ex depth -0.60387ex
                  \kern -0.6em \intop}\nolimits_{#1}^{#2}}%
          {\mathop{\kern 0.1em\vrule width 0.5em height 0.69678ex depth -0.60387ex
                  \kern -0.6em \intop}\nolimits_{#1}^{#2}}%
          {\mathop{\kern 0.1em\vrule width 0.5em height 0.69678ex depth -0.60387ex
                  \kern -0.6em \intop}\nolimits_{#1}^{#2}}}
\renewcommand{\div}{\operatorname{div}}
\newcommand{\kom}[1]{}
\renewcommand{\kom}[1]{{\bf [#1]}}
\definecolor{blau}{rgb}{0.1,0.0,0.9}
\definecolor{lila}{rgb}{0.5,0.0,0.5}
\definecolor{Bcolor}{rgb}{0.5,0.0,0.0}
\newcounter{komcounter}
\numberwithin{komcounter}{section}
\def\cleardoublepage{\clearpage\if@twoside \ifodd\c@page\else
\hbox{}
\thispagestyle{empty}
\newpage
\if@twocolumn\hbox{}\newpage\fi\fi\fi}
\begin{document}

\title {Variational approach to the asymptotic mean-value property for the $p$-Laplacian on Carnot groups}

\author{
Tomasz Adamowicz, Antoni Kijowski 
\\
\it\small Institute of Mathematics, Polish Academy of Sciences \\
\it\small ul. \'Sniadeckich 8, 00-656 Warsaw, Poland\/{\rm ;}
\it\small T.Adamowicz@impan.pl
\\
\\
Andrea Pinamonti
\\
\it\small Dipartimento di Matematica, Universit\`a di Trento \\
\it\small Via Sommarive 14, 38123 Povo,Trento, Italy\/{\rm ;}
\it\small andrea.pinamonti@unitn.it
\\
\\
Ben Warhurst %{\small{$^1$}}
\\
\it\small Institute of Mathematics,
\it\small University of Warsaw,\\
\it\small ul.Banacha 2, 02-097 Warsaw, Poland\/{\rm ;}
\it\small B.Warhurst@mimuw.edu.pl
}
\date{}
\maketitle

%\footnotetext[1]{T. Adamowicz and B. Warhurst were supported by a grant of National Science Centre, UMO-2017/25/B/ST1/01955. A. Pinamonti is partially supported by the University of Trento and GNAMPA of INDAM.}

\begin{abstract}
 Let $1<p\leq \infty$. We provide an asymptotic characterization of continuous viscosity solutions $u$ of the normalized $p$-Laplacian $\Delta_{p\,\bbG}^Nu=0$ in any Carnot group $\bbG$.
\newline
\newline \emph{Keywords}: asymptotic mean value property, Carnot group, Heisenberg group, Lie algebra, Lie group, mean value formula, $p$-Laplace, viscosity solution
\newline
\newline
\emph{Mathematics Subject Classification (2010):} Primary: 35H20; Secondary: 31E05, 35R03, 53C17.
\end{abstract}

%\title{???}
%\author{Tomasz Adamowicz}
%\address{}
%\thanks{}
%\date{\today}
%
%
%\author{Andrea Pinamonti}
%\address{Andrea Pinamonti: }
%\email{}
%\thanks{}
%
%\author{Ben Warhurst}
%\author{Antoni Kijowski}

%\begin{abstract}
%........
%\end{abstract}
\maketitle
\section{Introduction}

The study of mean-value properties of solutions of elliptic PDEs has a long and fruitful history. For harmonic functions in the Euclidean setting, the study goes back to Gauss, Koebe, Volterra, and Zaremba, to mention just a few, see also~\cite{adwa} for  recent results in Carnot groups. A generalized mean-value property originating in~\cite{MPR} and~\cite{MPR2}, called the asymptotic mean-value property, facilitates similar analysis of $p$-harmonic functions, one of the most important nonlinear counterparts of harmonic functions. Related are applications of $p$-harmonic functions in statistical Tug-of-War games, see for instance~\cite{MPR} and~\cite{PS}. In the setting of Carnot groups, similar studies have been conducted in~\cite{FLM1} and~\cite{FP}. 

 A new approach to the asymptotic mean-value property has been recently proposed in~\cite{Mag} (see also~\cite{BM} for relations with statistical games). More precisely, in~\cite{Mag}, the authors proved that every viscosity solution $u$ to the normalized $p$-laplacian in an open set $\Om\subset \R^n$ for a given $1\leq p\leq \infty$ (Definition \ref{def25}), can be characterized using an  asymptotic mean-value property in terms of the function $\mu_p(\varepsilon,u)(x)$, defined as the unique minimizer of the following variational problem
\begin{equation*}
\left\|u-\mu_p(\varepsilon,u)\right\|_{L^p(\overline{B_{\varepsilon}(x)})}=\min_{\lambda\in\bbR}\|u-\lambda\|_{L^p(\overline{B_{\varepsilon}(x)})},
\end{equation*}
where $B_{\varepsilon}(x)\subset \Omega$ denotes the ball centered at $x$ with radius $\varepsilon$. This notion encompasses the median, the mean-value and the min-max mean of a continuous function, see~\cite{Mag} for details.

In the present paper we generalize the results of \cite{Mag} to the setting of an arbitrary Carnot group.  %For the definitions and further auxiliary results we refer to Section~\ref{Carnot2}.

Let $\bbG$ be a Carnot group of step $k$ (Definition~\ref{defiC}). Denote by $\Delta_{p, \G}^N$ the subelliptic normalized $p$-Laplacian (see \eqref{p-norm-lapl} and \eqref{infty-norm-lapl}) and by $\mu_p(\varepsilon, u)$ the generalized median of a function $u$ defined uniquely as in~\eqref{minimizer-mu-p}. The theorem below stays that a viscosity solution of $\Delta_{p, \G}^Nu=0$ can be characterized asymptotically by the minimum $\mu_p(\varepsilon, u)$. This provides one more, intrinsic, way to characterize $p$-harmonic functions via a variant of the asymptotic mean-value property.

\begin{thm}\label{main-thm}
Let $1< p\leq \infty$ and let $\Omega\subset\bbG$ be open. For a function $u\in C^0(\Omega)$ the following are equivalent:
\begin{itemize}
	\item[(i)] $u$ is a viscosity solution of $\Delta_{p,\G}^N u=0$ in $\Omega$;
	\item[(ii)] $u(x)=\mu_p(\varepsilon,u)(x)+o(\varepsilon^2)$ as $\varepsilon\to 0$, in the viscosity sense for every $x\in\Omega$.
\end{itemize}
\end{thm}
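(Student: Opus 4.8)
The plan is to reduce the theorem to a single deterministic asymptotic expansion for smooth functions and then transfer it to the viscosity level by a touching/monotonicity argument. The heart of the matter is the following expansion, which I would establish first as a lemma: for every $\phi\in C^2(\Omega)$ and every $x\in\Omega$ with $\nabla_0\phi(x)\neq 0$ (here $\nabla_0$ is the horizontal gradient),
$$\mu_p(\varepsilon,\phi)(x)=\phi(x)+c_p\,\varepsilon^2\,\Delta_{p,\G}^N\phi(x)+o(\varepsilon^2)\qquad(\varepsilon\to 0^+),$$
with a positive constant $c_p$ depending only on $p$ and the group; separate expansions are needed when $\nabla_0\phi(x)=0$ and when $p=\infty$. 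Once this is available, the equivalence (i)$\Leftrightarrow$(ii) is almost formal: both conditions are, by Definition \ref{def25} and the viscosity reading of (ii), statements about the sign of a second-order quantity evaluated on $C^2$ functions touching $u$, and the expansion identifies these two quantities up to the positive factor $c_p\varepsilon^2$.

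For the expansion itself, which I expect to be the main obstacle, I would argue as follows. For $1<p<\infty$ the minimizer $\lambda=\mu_p(\varepsilon,\phi)(x)$ of \eqref{minimizer-mu-p} is characterized by the Euler--Lagrange identity
$$\int_{\overline{B_\varepsilon(x)}}|\phi(y)-\lambda|^{p-2}(\phi(y)-\lambda)\,dy=0,$$
with a unique root by strict convexity of $\lambda\mapsto\|\phi-\lambda\|_{L^p}^p$. I would rescale by the intrinsic dilation $\delta_{1/\varepsilon}$ to pull the integral back to the unit ball $B_1$, and insert the intrinsic (stratified) second-order Taylor expansion of $\phi$ at $x$. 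The crucial structural point is the anisotropy of the dilations: horizontal exponential coordinates scale like $\varepsilon$ while the coordinates of the $j$-th layer scale like $\varepsilon^{j}$, so writing $\phi(y)-\phi(x)=\varepsilon\,A(\zeta)+\varepsilon^2 B(\zeta)+o(\varepsilon^2)$ with $A(\zeta)=\langle\nabla_0\phi(x),\zeta_H\rangle$, the genuinely non-horizontal first-order terms enter only through $B$ and, being odd in the higher-layer variables, drop out upon integration against the even weight $|A|^{p-2}$. Setting $\lambda=\phi(x)+s\varepsilon^2$ and expanding the identity in $\varepsilon$ (the leading term vanishes by oddness of $|A|^{p-2}A$), the first nontrivial order gives
$$s=\frac{\tfrac12\int_{B_1}|A|^{p-2}\,\langle (D^2_0\phi)(x)\,\zeta_H,\zeta_H\rangle\,d\zeta}{\int_{B_1}|A|^{p-2}\,d\zeta},$$
where $D^2_0\phi$ is the symmetrized horizontal Hessian. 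It then remains to evaluate these horizontal moments: using the rotational symmetry of the horizontal slices of $B_1$ one computes the tensor $\int_{B_1}|A|^{p-2}\,\zeta_H\otimes\zeta_H\,d\zeta$ explicitly and recognizes the right-hand side as a positive multiple of $\Delta_{0,\G}\phi(x)+(p-2)\,\langle(D^2_0\phi)(x)\,\widehat{\nabla_0\phi},\widehat{\nabla_0\phi}\rangle$, i.e. of $\Delta_{p,\G}^N\phi(x)$ as in \eqref{p-norm-lapl}. The delicate steps are justifying that the $o(\varepsilon^2)$ Taylor remainder survives the nonlinear weight uniformly, and checking that the symmetry of the chosen ball is exactly what produces the precise coefficients $1$ and $(p-2)$.

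With the expansion in hand I would carry out the viscosity transfer using monotonicity and translation-invariance of $\mu_p(\varepsilon,\cdot)$ (if $v\le w$ on $\overline{B_\varepsilon(x)}$ then $\mu_p(\varepsilon,v)(x)\le\mu_p(\varepsilon,w)(x)$, and $\mu_p(\varepsilon,v+c)=\mu_p(\varepsilon,v)+c$). For (i)$\Rightarrow$(ii): given $\phi\in C^2$ touching $u$ from above at $x$ with $\nabla_0\phi(x)\ne0$, monotonicity gives $\mu_p(\varepsilon,u)(x)\le\mu_p(\varepsilon,\phi)(x)$, hence
$$u(x)-\mu_p(\varepsilon,u)(x)\ \ge\ \phi(x)-\mu_p(\varepsilon,\phi)(x)\ =\ -\,c_p\,\varepsilon^2\,\Delta_{p,\G}^N\phi(x)+o(\varepsilon^2).$$
Since $u$ is a subsolution, $\Delta_{p,\G}^N\phi(x)\ge0$ by Definition \ref{def25}, and the displayed chain delivers the one-sided estimate required by the viscosity form of (ii); the supersolution/touching-from-below case is symmetric. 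For (ii)$\Rightarrow$(i) the same inequalities read in reverse recover the sign of $\Delta_{p,\G}^N\phi(x)$ for every admissible touching $\phi$, which is exactly the viscosity requirement of (i). Finally, the two exceptional situations are handled separately but routinely, consistently with the relaxed viscosity formulation: when $\nabla_0\phi(x)=0$ the leading term $A$ vanishes and the expansion is governed by the extremal eigenvalues of $D^2_0\phi$, matching the lower/upper relaxation of $\Delta_{p,\G}^N$ used for degenerate gradients; and for $p=\infty$ one uses $\mu_\infty(\varepsilon,\phi)(x)=\tfrac12\big(\max_{\overline{B_\varepsilon(x)}}\phi+\min_{\overline{B_\varepsilon(x)}}\phi\big)$, whose expansion produces the normalized infinity-Laplacian \eqref{infty-norm-lapl}, either directly or as the limit $p\to\infty$. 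I expect essentially all the difficulty to be concentrated in the horizontal-moment computation and the control of the Taylor remainder in the non-degenerate expansion; the viscosity transfer itself is soft.
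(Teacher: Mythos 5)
Your proposal is correct and follows essentially the same route as the paper: the same Euler--Lagrange characterization of $\mu_p$, the same dilation rescaling to the unit ball, the same oddness/symmetry cancellations and horizontal-moment computation yielding the coefficient of $\Delta_{p,\G}^N\phi(x)$, and the same soft viscosity transfer via monotonicity and translation-invariance of $\mu_p$ (your extra case $\nabla_{V_1}\phi(x)=0$ is in fact unnecessary, since the definitions here only admit test functions with nonvanishing horizontal gradient). The one delicate step you flag --- pushing the $o(\varepsilon^2)$ Taylor remainder through the weight $|A|^{p-2}$, which is singular for $1<p<2$ --- is resolved in the paper by first proving the expansion for exact quadratic polynomials $q$ and then sandwiching a general $C^2$ function between $q\pm t\varepsilon^2$ using the monotonicity of $\mu_p$, a device you already have the tools for.
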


In order to prove this theorem we first prove Lemma~\ref{lem-main-cc}, where the asymptotic behavior of minimizers $\mu_p$ is described for quadratic polynomials on balls. We illustrate the discussion with examples of the Heisenberg  group and Carnot groups of step 2, see Examples 3 and 4 in Section 3. As presented in Remark~\ref{rem-2.15} in Section 3, our results generalize those obtained in the Euclidean setting in~\cite{Mag}. The techniques employed in \cite{Mag} do not allow us to include in our discussion the case $p=1$, see Remark~\ref{ex-fail} at the end of Section 3.

\section{Carnot groups}\label{Carnot2}
In what follows, we briefly recall some standard facts on Carnot groups, see \cite{libro,Cap,Gro, Mont} for a more detailed treatment.
%\vspace{5pt}
\begin{defn}\label{defiC}
A finite dimensional Lie algebra $\mathfrak{g}$, is said to be stratified of step $k\in\bbN$, if there exists subspaces $V_1,\ldots,V_k$ of $\mathfrak{g}$   such that:
\begin{align*}
	\mathfrak{g}=V_1\oplus\cdots\oplus V_k\hbox{ and } [V_1,V_i]=V_{i+1}\ \ i=1,\ldots, k-1 ; \quad 
	[V_1,V_k]=\{0\}.
\end{align*} We denote by $v_k$ the dimension of $ V_k$.
 
A connected and simply connected Lie group $\bbG$ is a Carnot group if its Lie algebra $\mathfrak{g}$ is finite dimensional and stratified. We also set $h_0:=0$, $h_i:=\sum_{j=1}^i v_j$ and $m:=h_k$.
\end{defn}

Using the exponential map, every Carnot group $\bbG$ of step $k$ is isomorphic as a Lie group to $(\bbR^{m},\cdot)$ where $\cdot$ is the group operation given by the Baker-Campbell-Hausdorff formula. 

For each $x\in \bbG$ we define left translation by  $\tau_x:\bbG\longrightarrow\bbG$ by 
\begin{align*}
\tau_{x}(y):=x\cdot y.
\end{align*}
For each $\lambda>0$ we define a dilation $\delta_{\lambda}:\bbG\longrightarrow\bbG$ by
\begin{align*}
\delta_{\lambda}(x)=\delta_{\lambda}(x_1,\ldots, x_m):=(\lambda^{\sigma_1} x_1,\ldots, \lambda^{\sigma_k} x_m),
\end{align*} 
where $\sigma_i\in\bbN$ is called the homogeneity of the variable $x_i$ in $\bbG$ and it is defined by
$\sigma_j:=i$, whenever $h_{i-1}< j \leq h_i$.

We endow $\bbG$ with a pseudonorm and pseudodistance by defining
\begin{align}
	&|x|_{\bbG}:=|(x^{(1)},\ldots, x^{(k)})|_{\bbG}:=\Big(\sum_{j=1}^k \norm{x^{(j)}}^{\frac{2
	k!}{j}}\Big)^{\frac{1}{2k!}} \label{distanza} \\
&\ud(x,y):=|y^{-1}\cdot x|_{\bbG}, \nonumber
\end{align}
where $x^{(j)}:=(x_{h_{j-1}+1},\ldots, x_{h_j})$ and $\norm{x^{(j)}}$ denotes the standard Euclidean norm in $\bbR^{h_j-h_{j-1}}$. We define the pseudoball centered at $x\in\bbG$ of radius $R>0$ by \[B(x,R)=B_R(x):=\{y\in\bbG:\, \ |y^{-1}\cdot x|_{\bbG}<R\}.\] 

We illustrate the concept of Carnot groups with the following important examples.

\begin{ex}[The Heisenberg groups $\mathbb{H}_n$]\label{ex-heisenberg}

The $n$-dimensional Heisenberg group  $\G=\mathbb{H}_n$, is the Carnot group with a $2$-step Lie algebra and orthonormal basis $\{ X_1,\dots,X_{2n},Z\}$ such that
$$
\mathfrak{g}_1={\rm Span}\, \{X_1, \dots, X_{2n}\},  \quad \mathfrak{g}_2={\rm Span}\, \{Z\},
$$
and the nontrivial brackets are $[X_i,X_{n+i}]=Z$ for $i=1,\dots,n$.

In particular, if $n=1$, then the Heisenberg group $\Hei$ is often presented using coordinates $(z,t)$, where $z=x+iy \in \mathbb{C}$ and $t \in \mathbb{R}$, and multiplication defined by $(z_1,t_1)(z_2,t_2)=(z_1+z_2, t_1+t_2 + 2\, {\rm Im}\,(z_1 \bar z_2))$. The pseudonorm given by $\|(z,t)\| =  ( |z|^4 + t^2)^{1/4}$ gives rise to a left invariant distance defined by $d_{\Hei}(p,q)= \|p^{-1}q\|$ which is called the Heisenberg distance. A dilation by $r >0$ is defined by $\delta_r (z,t)  = ( rz, r^2 t)$ and the left invariant Haar measure $\lambda$ is simply the $3$-dimensional Lebesgue measure, moreover $\delta_r^*d \lambda=r^4 d \lambda$. It follows that the Hausdorff dimension of the metric measure space $(\Hei, d_{\Hei}, \lambda)$ is $4$, and the space is $4$-Ahlfors regular, i.e., there exists a positive constant $c$ such that for all balls $B$ with radius $r$, we have $\frac{1}{c}r^4  \leq \mathcal{H}^4(B) \leq c r^4$, where $\mathcal{H}^4$ denotes the $4$-dimensional Hausdorff measure induced by $d_{\Hei}$.
\end{ex}

The following proposition, proved in \cite{libro}, shows that the Lebesgue measure is the Haar measure on Carnot groups.
\begin{prop}\label{prop-leb-meas}
Let $\bbG=(\bbR^m,\cdot)$ be a Carnot group. Then the Lebesgue measure on $\bbR^m$ is invariant with respect to the left and the right translations on $\bbG$. Precisely, if we denote by $|E|$ the Lebesgue measure of a measurable set $E\subset \bbR^m$, then for all $x\in \bbG$ we have that $|x\cdot E|=|E|=|E\cdot x|$. Moreover, for all $\lambda>0$ it holds $\delta_{\lambda}(E)|=\lambda^Q|E|$, where $Q:=\sum_{j=1}^m v_j\sigma_j$.
%\begin{equation}%\label{homdim}
%end{equation}
\end{prop}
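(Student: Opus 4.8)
The plan is to reduce both claims to the change-of-variables formula by computing the Jacobian determinants of the translations and of the dilations in the exponential coordinates $\bbG=(\bbR^m,\cdot)$. The only nontrivial ingredient is the graded shape of the Baker--Campbell--Hausdorff group law, which I would establish first.

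First I would record the structure of the product. Writing $x=(x^{(1)},\dots,x^{(k)})$ according to the stratification $\mathfrak g=V_1\oplus\cdots\oplus V_k$, the BCH formula gives, for the layer-$j$ component of $x\cdot y$,
\[
(x\cdot y)^{(j)}=x^{(j)}+y^{(j)}+P_j\big(x^{(1)},\dots,x^{(j-1)},y^{(1)},\dots,y^{(j-1)}\big),
\]
where $P_j$ is a polynomial depending only on the components of $x$ and $y$ lying in strictly lower layers. This follows from the grading $[V_i,V_l]\subset V_{i+l}$: apart from the linear part $x^{(j)}+y^{(j)}$, every contribution to layer $j$ comes from an iterated bracket of total weight $j$ with at least two factors, so each factor lies in a layer of weight strictly less than $j$; hence $P_j$ cannot depend on the coordinates in layers $j,j+1,\dots,k$.

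From this I would deduce left invariance of the Lebesgue measure. Fix $x\in\bbG$ and differentiate $y\mapsto\tau_x(y)=x\cdot y$, ordering the coordinates by increasing layer. If $y_i$ sits in layer $j$, the displayed formula shows $\partial(x\cdot y)_i/\partial y_l=\delta_{il}$ whenever $y_l$ lies in the same or a higher layer, since $P_j$ is independent of those variables, while the remaining partial derivatives occur only for $y_l$ in strictly lower layers. Thus the Jacobian matrix of $\tau_x$ is lower triangular with ones on the diagonal, so $\det D\tau_x\equiv 1$, and the change-of-variables formula yields $|x\cdot E|=|E|$. The identical computation for $y\mapsto y\cdot x$ gives $|E\cdot x|=|E|$ (consistently with the fact that a stratified, hence nilpotent, group is unimodular). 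In particular the Lebesgue measure is a left-invariant Radon measure, so it coincides, up to the normalization fixed by the coordinates, with the Haar measure of $\bbG$.

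Finally, the dilation identity is immediate: for fixed $\lambda>0$ the map $\delta_\lambda(x)=(\lambda^{\sigma_1}x_1,\dots,\lambda^{\sigma_m}x_m)$ is linear and diagonal, hence $\det D\delta_\lambda=\prod_{i=1}^m\lambda^{\sigma_i}=\lambda^{\sum_{i=1}^m\sigma_i}=\lambda^{Q}$, and change of variables gives $|\delta_\lambda(E)|=\lambda^Q|E|$. The whole of the argument beyond the first paragraph is routine; I expect the only real obstacle to be the verification that the BCH law has the claimed triangular form, which one either imports from the standard structure theory of stratified nilpotent Lie groups or checks directly by bookkeeping the finitely many nonzero brackets against the grading.
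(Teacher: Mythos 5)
Your proof is correct. The paper itself offers no proof of this proposition --- it is quoted from \cite{libro} --- and your argument is precisely the standard one behind that citation: the graded Baker--Campbell--Hausdorff law makes $\tau_x$ (and right translation) a polynomial map whose Jacobian matrix, in coordinates ordered by layer, is block lower triangular with identity diagonal blocks, so translations preserve Lebesgue measure, while $\delta_\lambda$ is diagonal with determinant $\prod_{i=1}^m\lambda^{\sigma_i}$. One small remark: the paper's displayed formula $Q=\sum_{j=1}^m v_j\sigma_j$ is evidently a misprint for the homogeneous dimension $Q=\sum_{j=1}^k j\,v_j=\sum_{i=1}^m\sigma_i$, which is exactly the exponent your Jacobian computation produces, so your identification is the intended one.
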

A basis  $X=\{X_1,\ldots, X_m\}$  of $\mathfrak{g}$, is called \emph{the Jacobian basis} if $X_j=J(e_j)$ where $(e_1,\ldots, e_m)$ is the canonical basis of $\bbR^m$ and $J:\bbR^m\longrightarrow \mathfrak{g}$ is defined by $J(\eta)(x):=\mathcal{J}_{\tau_x}(0)\cdot \eta$, where $\mathcal{J}_{\tau_x}$ denotes the Jacobian matrix of $\tau_x$.
%\vspace{5pt}

Let us recall the following classical proposition describing the Jacobian basis on Carnot groups, see \cite[Corollary 1.3.19]{libro} for a proof.
\begin{prop}\label{lcampi}
Let $\bbG=(\bbR^m,\cdot)$ be a Carnot group of step $k\in\bbN$. Then the elements of the Jacobian basis $\{X_1,\ldots, X_m\}$ have polynomial coefficients and if $h_{l-1}<j\leq h_l$, $ 1\leq l\leq k$, then
\[
 X_j(x)=\partial_j+\sum_{i>h_l}^m a_i^{(j)}(x)\partial_i,
\]
where $a_i^{(j)}(x)=a_i^{(j)}(x_1,\ldots, x_{h_{l-1}})$ when $h_{l-1}<i\leq h_l$, and $a_i^{(j)}(\delta_{\lambda}(x))=\lambda^{\sigma_i-\sigma_j} a_i^{(j)}(x)$.
\end{prop}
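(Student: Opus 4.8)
The plan is to compute the Jacobian basis directly from the group law and to extract all three assertions --- polynomiality, triangular form, and homogeneity --- from one weight-counting argument. By the definition of $J$, the field $X_j(x)$ is the $j$-th column of $\mathcal{J}_{\tau_x}(0)$, so writing the product componentwise as $(x\cdot y)_i$ we have
\[
X_j(x)=\sum_{i=1}^m a_i^{(j)}(x)\,\partial_i,\qquad a_i^{(j)}(x):=\frac{\partial (x\cdot y)_i}{\partial y_j}\Big|_{y=0}.
\]
First I would record two structural facts about the product. Since $\bbG$ is nilpotent and its law is given by the Baker--Campbell--Hausdorff formula, the series terminates and each $(x\cdot y)_i$ is a \emph{polynomial} in $(x,y)$; hence every $a_i^{(j)}$ is a polynomial, which already yields the polynomiality claim. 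Moreover, because the stratification $\mathfrak g=V_1\oplus\cdots\oplus V_k$ with $[V_1,V_i]=V_{i+1}$ makes $\delta_\lambda$ a Lie group automorphism, we have $\delta_\lambda(x\cdot y)=\delta_\lambda(x)\cdot\delta_\lambda(y)$, so each component $(x\cdot y)_i$ is $\delta_\lambda$-homogeneous of weight $\sigma_i$ when the variables $x_r,y_r$ are assigned weight $\sigma_r$.

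The homogeneity of the coefficients is then immediate: differentiating a weight-$\sigma_i$ polynomial in $y_j$ lowers its weight by $\sigma_j$, so $a_i^{(j)}$ is $\delta_\lambda$-homogeneous of weight $\sigma_i-\sigma_j$, i.e. $a_i^{(j)}(\delta_\lambda x)=\lambda^{\sigma_i-\sigma_j}a_i^{(j)}(x)$. The triangular form would then follow by weight bookkeeping. Expanding BCH as $(x\cdot y)_i=x_i+y_i+(\text{commutator terms})$, where every commutator term is at least bilinear and contains at least one $x$- and one $y$-factor, I observe that after $\partial_{y_j}|_{y=0}$ only the monomials of the exact form $x^\alpha y_j$ survive, each contributing its coefficient times $x^\alpha$; by the weight identity such a monomial satisfies $w(\alpha)=\sigma_i-\sigma_j$, writing $w(\alpha):=\sum_r\alpha_r\sigma_r$. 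Consequently, if $\sigma_i<\sigma_j$ the weight is negative and $a_i^{(j)}\equiv0$; if $\sigma_i=\sigma_j$ then $w(\alpha)=0$ forces $\alpha=0$, and since the only purely linear-in-$y$ term of $(x\cdot y)_i$ is $y_i$, we get $a_i^{(j)}=\delta_{ij}$. Writing $h_{l-1}<j\leq h_l$ (so $\sigma_j=l$), these two cases give exactly $X_j=\partial_j+\sum_{i>h_l}a_i^{(j)}\partial_i$.

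For the dependence of the coefficients on the variables I would argue by weights once more. If $i$ lies in layer $l'$, i.e. $h_{l'-1}<i\leq h_{l'}$, then each surviving monomial $x^\alpha y_j$ has $w(\alpha)=\sigma_i-\sigma_j=l'-l$; since every variable carries weight at least $1$ and $l\geq1$, each variable actually occurring in $x^\alpha$ has weight at most $l'-l\leq l'-1$, hence belongs to a layer strictly below $l'$. Therefore $a_i^{(j)}$ depends only on $x_1,\dots,x_{h_{l'-1}}$, as claimed.

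I expect the main obstacle to be the careful justification that the BCH product is genuinely polynomial \emph{and} graded in the stated sense --- that the higher-order BCH terms are iterated brackets whose coordinate expressions respect the weights $\sigma_r$. Once this graded polynomial structure of $(x\cdot y)_i$ is established, everything else reduces to the elementary weight counting above; the delicate point is precisely setting up the grading of the group law, which is where the stratification $[V_1,V_i]=V_{i+1}$ (and not merely nilpotency) is used, since it is exactly this that guarantees $\delta_\lambda$ is an automorphism and hence that each component is $\delta_\lambda$-homogeneous.
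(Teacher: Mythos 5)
Your argument is correct and complete; the paper gives no proof of this proposition, merely citing \cite[Corollary 1.3.19]{libro}, and your BCH-plus-weight-counting argument is essentially the standard one found there. The one point you flag as delicate --- that each $(x\cdot y)_i$ is a graded polynomial --- is in fact already settled by your own observation that $\delta_\lambda$ is a group automorphism, since a polynomial satisfying $P(\delta_\lambda x,\delta_\lambda y)=\lambda^{\sigma_i}P(x,y)$ for all $\lambda>0$ necessarily has every monomial of weight exactly $\sigma_i$.
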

The following definition is one of the key concepts of the analysis on Carnot groups. Let $X=\{X_1,\ldots, X_m\}$ be a Jacobian basis of $\bbG=(\bbR^m,\cdot)$. For any function $u\in C^1(\bbR^m)$, we define its \emph{horizontal gradient}  by the formula
\[
 \nabla_{V_1}u:=\sum_{i=1}^{h_1} (X_i u)X_i \quad 
\]
and the \emph{intrinsic divergence} of $u$ as
\[
\div_{V_1} u:=\sum_{i=1}^{h_1} X_i u.
\]
Moreover, for $2\leq j\leq k$, we set  $\nabla_{V_j}u:=\sum_{h_{j-1}<i\leq h_j} (X_i u)X_i$.
%and 
%\begin{align}\label{grad1}
%\end{align} 
The horizontal Laplacian $\Delta_{\bbG}u$ of a function $u:\bbG\longrightarrow\bbR$ is defined by the following 
\begin{equation*}%\label{gradalto}
		\Delta_{\bbG}u:= \sum_{i=1}^{h_1} X_i^2 u.
\end{equation*}
A priori, one studies solutions to the Laplace equation under the $C^2$-regularity assumption. However, as in the Euclidean setting, it is natural to weaken the required degree of regularity and consider weak solutions belonging to the so-called horizontal Sobolev space. For further details we refer to e.g.~\cite{cdg, man-ming}. 
The following results describe the Taylor expansion formula in the Carnot groups, see~\cite[Proposition 20.3.11]{libro} .
\begin{prop} Let $\Om\subset \bbG$ be an open neighborhood of $0$ and let $u\in C^{\infty}(\Omega)$. Then, the following Taylor formula holds for any point $P=(x^{(1)},x^{(2)},\ldots, x^{(k)}) \in \Om$: 
\[
u(P)=u(0)+\langle\nabla_{V_1} u(0), x^{(1)}\rangle_{\R^{h_1}}+\langle\nabla_{V_2} u(0), x^{(2)}\rangle_{\R^{h_2}}+\frac12\langle D_{V_1}^{2,*} u(0) x^{(1)}, x^{(1)}\rangle_{\R^{h_1}}+o(\|P\|^2)
\]
where 
\begin{align*}%\label{horhes}
	D_{V_1}^{2,*}u:=\left(\frac{(X_iX_j+X_jX_i)u}{2}\right)_{1\leq i,j\leq h_1}
\end{align*}
is the so called \emph{symmetrized horizontal Hessian} of $u$.
\end{prop}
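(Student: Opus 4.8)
The plan is to derive the formula from the ordinary Euclidean second-order Taylor expansion of $u$ at $0$ and then reorganize the resulting polynomial according to the homogeneity induced by the dilations $\delta_\lambda$. Writing $P=(x_1,\dots,x_m)$, since $u\in C^\infty(\Omega)$ we have
\[
u(P)=u(0)+\sum_{j=1}^m\partial_j u(0)\,x_j+\tfrac12\sum_{i,j=1}^m\partial_i\partial_j u(0)\,x_ix_j+o(|P|^2),
\]
where $|P|$ is the Euclidean norm. Each monomial $x_1^{\alpha_1}\cdots x_m^{\alpha_m}$ is $\delta_\lambda$-homogeneous of degree $\sum_j\alpha_j\sigma_j$, and the idea is to retain exactly the monomials of homogeneous degree at most $2$ and push everything else into a remainder measured in the pseudonorm $\|P\|$ of \eqref{distanza}.

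The first technical step is a norm comparison. From \eqref{distanza} one reads off $|x_j|\leq\|P\|^{\sigma_j}$ for every $j$, so any $\delta_\lambda$-homogeneous monomial of degree $d$ is $O(\|P\|^d)$, and for $\|P\|\leq 1$ one has $|P|^2=\sum_j x_j^2\leq m\,\|P\|^2$. Hence the Euclidean remainder $o(|P|^2)$ is itself $o(\|P\|^2)$, while every Euclidean-Taylor monomial of homogeneous degree $d\geq 3$ (namely $x_s$ with $\sigma_s\geq 3$, and $x_ix_j$ with $\sigma_i+\sigma_j\geq 3$) is $O(\|P\|^3)=o(\|P\|^2)$. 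Collecting the surviving monomials, of homogeneous degree $\leq 2$, leaves
\[
u(P)=u(0)+\sum_{j=1}^{h_1}\partial_j u(0)\,x_j+\sum_{h_1<s\leq h_2}\partial_s u(0)\,x_s+\tfrac12\sum_{i,j=1}^{h_1}\partial_i\partial_j u(0)\,x_ix_j+o(\|P\|^2).
\]

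It remains to put the three nontrivial sums in intrinsic form. By Proposition~\ref{lcampi} each coefficient $a_i^{(j)}$ is $\delta_\lambda$-homogeneous of positive degree $\sigma_i-\sigma_j$, hence vanishes at $0$; therefore $X_j(0)=\partial_j$ and $X_j u(0)=\partial_j u(0)$ for every $j$. This identifies the first two sums with $\langle\nabla_{V_1}u(0),x^{(1)}\rangle$ and $\langle\nabla_{V_2}u(0),x^{(2)}\rangle$ at once. For the quadratic term, using $a_s^{(j)}(0)=0$ again I would compute, for $i,j\leq h_1$,
\[
X_iX_j u(0)=\partial_i\partial_j u(0)+\sum_{s>h_1}\partial_i a_s^{(j)}(0)\,\partial_s u(0);
\]
since $x_ix_j$ is symmetric, $\langle D_{V_1}^{2,*}u(0)x^{(1)},x^{(1)}\rangle=\sum_{i,j\leq h_1}X_iX_j u(0)\,x_ix_j$, so the claim reduces to the vanishing of $\sum_{i,j\leq h_1}\partial_i a_s^{(j)}(0)\,x_ix_j$ for each $s$.

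The hard part is exactly this vanishing, and it is where the group structure is used. Homogeneity forces $a_s^{(j)}$, for $\sigma_s=2$ and $j\leq h_1$, to be linear in the $V_1$-variables, so $\partial_i a_s^{(j)}(0)=:b_{ij}^s$ is constant; the point is that $b_{ij}^s$ is \emph{antisymmetric} in $i,j$, whence the symmetric quadratic form $\sum_{i,j}b_{ij}^s x_ix_j$ is zero. I would obtain the antisymmetry from the Baker--Campbell--Hausdorff form of the group law: in exponential coordinates the $V_2$-component of $P\cdot Q$ equals $P_s+Q_s+\tfrac12\sum_{i,j\leq h_1}c_{ij}^s\,P_iQ_j$ with $c_{ij}^s$ the antisymmetric structure constants, and differentiating $X_j(P)=\partial_{Q_j}(P\cdot Q)\big|_{Q=0}$ gives $b_{ij}^s=\tfrac12 c_{ij}^s$. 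The antisymmetry of $c_{ij}^s$ then yields $\tfrac12\langle D_{V_1}^{2,*}u(0)x^{(1)},x^{(1)}\rangle=\tfrac12\sum_{i,j\leq h_1}\partial_i\partial_j u(0)\,x_ix_j$, which combined with the previous display is the asserted Taylor formula. One can sanity-check this on $\Hei$, where $X_1=\partial_x+2y\,\partial_t$ and $X_2=\partial_y-2x\,\partial_t$ make $(\partial_i a_3^{(j)}(0))_{i,j}=\bigl(\begin{smallmatrix}0&-2\\2&0\end{smallmatrix}\bigr)$ antisymmetric.
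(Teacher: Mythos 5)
Your derivation is correct, and it is worth noting that the paper does not prove this proposition at all: it simply cites \cite[Proposition 20.3.11]{libro}. So your argument --- Euclidean second-order Taylor expansion at $0$, reorganized by $\delta_\lambda$-homogeneity, with all monomials of homogeneous degree $\ge 3$ absorbed into $o(\|P\|^2)$ via $|x_j|\le\|P\|^{\sigma_j}$, and the surviving terms rewritten intrinsically using $a_i^{(j)}(0)=0$ and the BCH antisymmetry --- is a genuine self-contained proof rather than a variant of anything in the paper, and it is essentially the standard route taken in the cited monograph. The only step you should make explicit concerns the identity
\[
X_iX_j u(0)=\partial_i\partial_j u(0)+\sum_{s>h_1}\partial_i a_s^{(j)}(0)\,\partial_s u(0),
\]
where the sum runs over \emph{all} $s>h_1$, while your antisymmetry argument only disposes of the layer-two indices $\sigma_s=2$. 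For $\sigma_s\ge 3$ the coefficient $a_s^{(j)}$ is a $\delta_\lambda$-homogeneous polynomial of degree $\sigma_s-1\ge 2$; since the only $\delta_\lambda$-homogeneous monomials of degree $1$ are the first-layer variables themselves, such a polynomial has no linear part and hence $\partial_i a_s^{(j)}(0)=0$ for $i\le h_1$. This one-line remark closes the remaining case, and with it the proof is complete; the Heisenberg sanity check is consistent with the group law of Example~\ref{ex-heisenberg}.
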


Next, we recall the definition of the main differential operator studied in this work.  For $p\in [1,+\infty]$ the \emph{subelliptic normalized $p$-Laplace operator} is  
\begin{align}\label{p-norm-lapl}
	\Delta_{p, \bbG}^N u:=\frac{\div_{V_1}(|\nabla_{V_1} u|^{p-2}\nabla_{V_1} u)}{|\nabla_{V_1} u|^{p-2}}\quad \mbox{if}\quad 1\leq p<\infty
\end{align}
and 
\begin{equation}\label{infty-norm-lapl}
\begin{split}
\Delta_{\infty, \bbG}^Nu:= \frac{\Big\langle D_{V_1}^{2,*}u \,\,\frac{\nabla_{V_1}u}{|\nabla_{V_1}u|},\frac{\nabla_{V_1}u}{|\nabla_{V_1}u|}\Big\rangle}{|\nabla_{V_1} u|^2}.
\end{split}
\end{equation}
%where
%\begin{align}\label{horhes}
	%D_{V_1}^{2,*}u:=\left(\frac{(X_iX_j+X_jX_i)u}{2}\right)_{1\leq i,j\leq 2n}
%\end{align}
%is the so called symmetrized horizontal Hessian of $u$.
Note that for $p=2$, $\Delta_{2,\bbG} u=\Delta_{\bbG} u$ is the so called Kohn-Laplace operator in $\bbG$. Thus, the $p$-Laplace operator is the natural generalization of the Laplacian. Furthermore, the $\infty$-Laplacian can be viewed as a limit of $p$-Laplacians in the appropriate sense for $p\to \infty$.  Among its applications, let us mention best Lipschitz extensions, image processing and mass transport problems, see e.g. the presentation in~\cite{MPR} and references therein.

In the case of the non-renormalized $p$-Laplacian, notions of a viscosity solution and a weak solution agree for $1<p<\infty$, see~\cite{JLM} for the result in the Euclidean setting and~\cite{bie} for the Heisenberg group. Since the normalized $p$-Laplacian is in the non-divergence form, the concept of viscosity solutions is more handy than weak solutions.  Let us now introduce this notion.  
\begin{defn}\label{def25}
Fix a value of $p\in [1,\infty]$ and consider the subelliptic normalized $p$-Laplace equation 
\begin{equation}\label{plaplace}
\Delta_{p, \bbG}^N u=0 \qquad \mbox{in}\qquad \Omega\subset\bbG.
\end{equation}
\begin{itemize}
\item[(i)] A lower semi-continuous function $u$, is a viscosity supersolution of  (\ref{plaplace}), if for every $x_0\in\Omega$, and every $\phi\in C^2(\Omega)$ such that $\nabla_{V_1}\phi(x_{0})\not=0$ and $u-\phi$ has a strict minimum at $x_{0}\in\Omega$, we have
$
\Delta_{p,\bbG}^N \phi \leq 0
$
in $\Omega$.
\item[(ii)] A lower semi-continuous function $u$, is a viscosity subsolution of  (\ref{plaplace}), if for every $x_0\in\Omega$, and every $\phi\in C^2(\Omega)$ such that $\nabla_{V_1}\phi(x_{0})\not=0$ and $u-\phi$ has a strict minimum at $x_{0}\in\Omega$, we have
$
\Delta_{p,\bbG}^N \phi \geq 0
$
in $\Omega$.
\item[(iii)] A continuous function $u$ is a viscosity solution of of  (\ref{plaplace}),  if it is both a viscosity supersolution and a viscosity subsolution in $\Omega$.
\end{itemize}
\end{defn}
Fix an open set $\Omega\subset\bbG$, let $1\leq p\leq \infty$ and let $u$ be a real-valued continuous function in $\Omega$.  For a given $x\in \Om$, choose $\ep>0$ so that $\overline{B_{\ep}(x)}\subset \Om$,  we define the number $\mu_p(\varepsilon, u)(x)$ (or simply $\mu_p(\varepsilon, u)$ if the point $x$ is clear from the context)  as the unique real number satisfying
\begin{equation}\label{minimizer-mu-p}
\|u-\mu_p(\varepsilon,u)\|_{L^p(\overline{B_{\varepsilon}(x)})}=\min_{\lambda\in\bbR}\|u-\lambda\|_{L^p(\overline{B_{\varepsilon}(x)})}.
\end{equation}
The following properties of $\mu_p(\varepsilon, u)(x)$ have been proved in \cite{Mag} for the setting of compact topological spaces $X$, equipped with a positive Radon measure $\nu$ such that $\nu(X)<\infty$. Here we apply results from \cite{Mag} to $X=\overline{B_{\ep}(x)}\subset \G$ and $\nu$ the Lebesgue measure, cf. Proposition~\ref{prop-leb-meas}.

 In Theorem~\ref{prop} below, we summarize results proven in Theorems 2.1, 2.4 and 2.5 in~\cite{Mag}.
 
\begin{thm} \label{prop} Let $1\leq p\leq \infty$ and $u\in C(\overline{B_{\varepsilon}(x)})$. 
\begin{itemize}
	\item[(1)] There exists a unique real valued $\mu_p(\varepsilon,u)$ such that 
\[
\|u-\mu_p(\varepsilon,u)\|_{L^p(\overline{B_{\varepsilon}(x)})}=\min_{\lambda\in\bbR}\|u-\lambda\|_{L^p(\overline{B_{\varepsilon}(x)})}.
\]
Furthermore, for $1\leq p<\infty$, $\mu_p(\varepsilon,u)$ is characterized by the equation
\begin{equation}\label{char}
\int_{B_{\varepsilon}(x)}\left|u(y)-\mu_p(\varepsilon,u)\right|^{p-2}\left(u(y)-\mu_p(\varepsilon,u)\right)\, dy=0, 
\end{equation}
where for $1\leq p<2$ we assume that the integrand is zero if $u(y)-\mu_p(\varepsilon,u)=0$. For $p=\infty$ we have the following equality:
\begin{equation}\label{inf}
\mu_\infty (\varepsilon,u) = \frac12 \left( \min_{\overline{B(x,\varepsilon)}} u +\max_{\overline{B(x,\varepsilon)}} u \right).
\end{equation}
 \item[(2)] If $1\leq p \leq \infty$ then it follows that
\begin{equation*}%\label{cont}
\left|\|u-\mu_p(\varepsilon,u)\|_{L^p(\overline{B_{\varepsilon}(x)})}-\|v-\mu_p(\varepsilon,v)\|_{L^p(\overline{B_{\varepsilon}(x)})}\right|\leq \|u-v\|_{L^p(\overline{B_{\varepsilon}(x)})}
\end{equation*}
for any $u,v\in L^p(\overline{B_{\varepsilon}(x)})$. Moreover, if $u_n\to u$ in $L^p(\overline{B_{\varepsilon}(x)})$ for $1\leq p\leq \infty$ and $u_n,u\in C^0(\overline{B_{\varepsilon}(x)})$ for $p=1$, then $\mu_p(\varepsilon,u_n)\to \mu_p(\varepsilon,u)$ as $n\to\infty$, the same is true for any $p\in [1,\infty]$ if $\{u_n\}\subset C^0(\overline{B_{\varepsilon}(x)})$ converges uniformly on $\overline{B_{\varepsilon}(x)}$ as $n\to \infty$.
\item[(3)] Let $u$ and $v$ be two functions which, in the case $1<p\leq \infty$, belong to   $L^p(B_{\varepsilon}(x))$, and in the case  $p=1$, belong to $C^0(\overline{B_{\varepsilon}}(x))$. If $u\leq v$ a.e. in $\overline{B_{\varepsilon}}(x)$, then 
$\mu_p(\varepsilon,u)\leq \mu_p(\varepsilon,v)$.
\item[(4)] $\mu_p(\varepsilon,u+c)=\mu_p(\varepsilon,u)+c$ for every $c\in \bbR$.
\item[(5)] $\mu_p(\varepsilon,cu)=c\mu_p(\varepsilon,u)$ for every $c\in \bbR$.
\end{itemize}
\end{thm}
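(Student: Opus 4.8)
The plan is to reduce everything to the behavior of the single real variable function
\[
F_p(\lambda):=\|u-\lambda\|_{L^p(\overline{B_{\varepsilon}(x)})},\qquad \lambda\in\bbR,
\]
on the compact set $K:=\overline{B_{\varepsilon}(x)}$, equipped with the Lebesgue measure $\nu$, which is finite and positive and has no atoms. Properties (4) and (5) come for free from the defining minimization: writing $\|u+c-\lambda\|_{L^p}=\|u-(\lambda-c)\|_{L^p}$ and $\|cu-\lambda\|_{L^p}=|c|\,\|u-(\lambda/c)\|_{L^p}$ and changing the minimization variable shows at once that the minimizer transforms by $\mu_p(\varepsilon,u)\mapsto\mu_p(\varepsilon,u)+c$ and $\mu_p(\varepsilon,u)\mapsto c\,\mu_p(\varepsilon,u)$ respectively; for $c<0$ in (5) one additionally uses that $t\mapsto|t|^{p-2}t$ is odd, so that $\mu_p(\varepsilon,-u)=-\mu_p(\varepsilon,u)$. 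I would record these first.

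For (1) I would split by the value of $p$. For $1<p<\infty$ the map $\lambda\mapsto\int_K|u-\lambda|^p\,d\nu$ is continuous, coercive (it tends to $+\infty$ as $|\lambda|\to\infty$ since $u$ is bounded on the compact set $K$) and strictly convex, because $t\mapsto|t|^p$ is strictly convex; hence it has a unique minimizer, and differentiating under the integral sign and setting the derivative to zero yields the Euler--Lagrange identity \eqref{char}. For $p=\infty$ I would argue directly: with $M:=\max_K u$ and $m:=\min_K u$ one has $\sup_K|u-\lambda|=\max(M-\lambda,\lambda-m)$, a strictly convex piecewise-linear function minimized exactly at $\lambda=\tfrac12(M+m)$, which is \eqref{inf}. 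For $p=1$ convexity and coercivity still produce a minimizer, but strict convexity fails; the minimizer is a median and its uniqueness is the delicate point, to be argued from the continuity of $u$ together with the non-atomicity of $\nu$.

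Properties (2) and (3) I would treat structurally. The quantity $\min_\lambda\|u-\lambda\|_{L^p}$ is precisely $\dist_{L^p(K)}(u,\bbR)$, the distance from $u$ to the one-dimensional subspace of constants; distance to a fixed set is $1$-Lipschitz, so the estimate in (2) is just
\[
\bigl|\,\dist_{L^p}(u,\bbR)-\dist_{L^p}(v,\bbR)\,\bigr|\le\|u-v\|_{L^p(K)},
\]
proved from the triangle inequality $\|u-\mu_p(\varepsilon,u)\|_{L^p}\le\|u-\mu_p(\varepsilon,v)\|_{L^p}\le\|u-v\|_{L^p}+\|v-\mu_p(\varepsilon,v)\|_{L^p}$ and its symmetric counterpart. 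For the convergence claim I would first bound the minimizers (the displayed estimate controls $\|u_n-\mu_p(\varepsilon,u_n)\|_{L^p}$, hence $|\mu_p(\varepsilon,u_n)|$), extract a convergent subsequence $\mu_p(\varepsilon,u_{n_k})\to\ell$, pass to the limit in $\|u_{n_k}-\mu_p(\varepsilon,u_{n_k})\|_{L^p}$ using the $L^p$ (or uniform) convergence and the Lipschitz estimate, deduce that $\ell$ realizes $\dist_{L^p}(u,\bbR)$, and invoke the uniqueness from (1) to obtain $\ell=\mu_p(\varepsilon,u)$; since every subsequence admits a further subsequence with this same limit, the whole sequence converges. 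For (3) I would exploit monotonicity of the characterizing map: for $1\le p<\infty$ set $G_w(\lambda):=\int_K|w-\lambda|^{p-2}(w-\lambda)\,d\nu$, which is strictly decreasing in $\lambda$ and vanishes exactly at $\mu_p(\varepsilon,w)$; from $u\le v$ a.e.\ one gets $G_u(\lambda)\le G_v(\lambda)$ pointwise, so $G_v(\mu_p(\varepsilon,u))\ge G_u(\mu_p(\varepsilon,u))=0=G_v(\mu_p(\varepsilon,v))$, and strict monotonicity of $G_v$ forces $\mu_p(\varepsilon,u)\le\mu_p(\varepsilon,v)$. For $p=\infty$ the ordering is immediate from \eqref{inf}.

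I expect the genuine difficulty to be concentrated in the degenerate endpoints $p\in\{1,\infty\}$ and in the convergence assertion of (2). For $1<p<\infty$ everything is driven by the strict convexity and coercivity of $\int_K|u-\lambda|^p\,d\nu$, which simultaneously deliver existence, uniqueness, the characterization \eqref{char}, and the continuous dependence of the minimizer. At $p=1$ strict convexity is lost, so the uniqueness of the median is the subtle step and must be extracted from the regularity of $u$ and of $\nu$; at $p=\infty$ the functional is no longer an integral and one relies entirely on the explicit formula \eqref{inf}. Consequently the convergence statement has to be proved piecewise, with the $L^p$ hypothesis for $1\le p<\infty$ and the uniform-convergence hypothesis needed for $p=\infty$ (and for $p=1$ when only continuity is assumed), which is exactly the distinction drawn in the statement.
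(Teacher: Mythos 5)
The paper does not actually prove this theorem: it is explicitly imported from Theorems 2.1, 2.4 and 2.5 of \cite{Mag}, with the only observation being that those results, stated for a compact topological space carrying a finite positive Radon measure, apply verbatim to $X=\overline{B_{\varepsilon}(x)}\subset\bbG$ with Lebesgue measure. Your blind proof is therefore necessarily a different route from the paper's (which offers none), and for $1<p\le\infty$ it is essentially complete and correct: strict convexity and coercivity of $\lambda\mapsto\int_K|u-\lambda|^p\,d\nu$ give existence, uniqueness and, by differentiation under the integral sign (justified for all $p>1$ since $\tfrac{d}{d\lambda}|u-\lambda|^p=-p|u-\lambda|^{p-2}(u-\lambda)$ is dominated on bounded $\lambda$-intervals), the characterization \eqref{char}; the explicit formula $\sup_K|u-\lambda|=\max(M-\lambda,\lambda-m)$ gives \eqref{inf} (note this function is convex piecewise linear, not strictly convex, but its minimizer is still unique since the slopes are $-1$ and $+1$); the interpretation of $\min_\lambda\|u-\lambda\|_{L^p}$ as $\dist_{L^p}(u,\bbR)$ gives the $1$-Lipschitz bound in (2), and the compactness-plus-uniqueness argument gives the convergence of minimizers; the monotonicity of $G_w$ gives (3); and the change of minimization variable gives (4) and (5) (for (5) with $c<0$ the scaling $\|cu-\lambda\|=|c|\,\|u-\lambda/c\|$ already suffices, the oddness remark is not needed).

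The one genuine gap is the uniqueness of the minimizer when $p=1$, which you correctly flag as the delicate point but attribute to the wrong ingredient. Non-atomicity of $\nu$ is not enough: Lebesgue measure restricted to a disconnected compact set is non-atomic, yet a continuous function on it can have a whole interval of medians. What is actually needed is that $\overline{B_{\varepsilon}(x)}$ is connected and that Lebesgue measure charges every nonempty open subset. Then, if $[\lambda_-,\lambda_+]$ with $\lambda_-<\lambda_+$ were an interval of minimizers of $F_1$, one would need $\nu(\{\lambda_-<u<\lambda_+\})=0$; but $\lambda_\pm$ necessarily lie in $[\min_K u,\max_K u]$, so by the intermediate value theorem $\{\lambda_-<u<\lambda_+\}$ is a nonempty open set and hence has positive measure, a contradiction. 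The same positivity of measure on slabs $\{\lambda_1<u<\lambda_2\}$ is what makes $G_u(\lambda)=\nu(\{u>\lambda\})-\nu(\{u<\lambda\})$ \emph{strictly} decreasing on the relevant range, which your argument for (3) at $p=1$ silently uses. With that ingredient supplied, the $p=1$ case closes and the whole proof stands.
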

%As shown in \cite{JJ,JLM} for the Euclidean case and in \cite{bieske2} for the subelliptic  case, it suffices to consider smooth functions whose horizontal gradient does not vanish. 
%In addition, in \cite{bieske2}, it is shown that the notions of viscosity and weak solutions agree for the equation $-\Delta_{q, \mathbb{G}}u=0$.
The following is \cite[Corollary 2.3]{Mag} in Carnot groups of step $k$:
\begin{cor}\label{cor}
Let $u\in L^p(B_{\varepsilon}(x))$, for $1< p\leq \infty$, or in $C^0(\overline{B_{\varepsilon}}(x))$ for $p=1$. Let $u_{\varepsilon}(z)=u(x\delta_{\varepsilon}(z))$ for $z\in \overline{B_{1}}(0)$, then
\[
\mu_p(\varepsilon,u)(x)=\mu_p(1,u_{\varepsilon})(0).
\]
\end{cor}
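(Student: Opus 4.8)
The plan is to reduce the minimization over $\overline{B_\varepsilon(x)}$ to the minimization over $\overline{B_1(0)}$ by means of the change of variables $\Phi:=\tau_x\circ\delta_\varepsilon$, i.e.\ $\Phi(z)=x\cdot\delta_\varepsilon(z)$, which is precisely the substitution built into the definition $u_\varepsilon=u\circ\Phi$. Everything then follows from the homogeneity of the underlying geometric and measure-theoretic structure, together with the uniqueness of the minimizer.

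First I would check that $\Phi$ maps $\overline{B_1(0)}$ bijectively onto $\overline{B_\varepsilon(x)}$ (and likewise the open balls). Writing $y=x\cdot\delta_\varepsilon(z)$ and using left-invariance,
\[
y^{-1}\cdot x=\delta_\varepsilon(z)^{-1}\cdot x^{-1}\cdot x=\delta_\varepsilon(z)^{-1},
\]
so that, since the pseudonorm~\eqref{distanza} depends only on the Euclidean norms of the layers (hence $|w^{-1}|_\bbG=|-w|_\bbG=|w|_\bbG$) and is homogeneous of degree one under $\delta_\varepsilon$ (each layer $z^{(j)}$ is scaled by $\varepsilon^j$, so $\norm{z^{(j)}}^{2k!/j}$ is scaled by $\varepsilon^{2k!}$),
\[
|y^{-1}\cdot x|_\bbG=|\delta_\varepsilon(z)^{-1}|_\bbG=|\delta_\varepsilon(z)|_\bbG=\varepsilon\,|z|_\bbG.
\]
Therefore $y\in\overline{B_\varepsilon(x)}$ if and only if $z\in\overline{B_1(0)}$, and $\Phi$ is a homeomorphism between the two closed balls.

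Next I would record how the reference measure transforms under $\Phi$. By Proposition~\ref{prop-leb-meas} the Lebesgue measure is left-invariant and scales as $\varepsilon^Q$ under $\delta_\varepsilon$, where $Q$ is the homogeneous dimension; hence the substitution $y=\Phi(z)$ gives $\ud y=\varepsilon^Q\,\ud z$. Combining this with the ball correspondence, for every $\lambda\in\bbR$ and every $1\le p<\infty$,
\[
\|u-\lambda\|_{L^p(\overline{B_\varepsilon(x)})}^p=\int_{\overline{B_\varepsilon(x)}}|u(y)-\lambda|^p\,\ud y=\varepsilon^Q\int_{\overline{B_1(0)}}|u_\varepsilon(z)-\lambda|^p\,\ud z=\varepsilon^Q\,\|u_\varepsilon-\lambda\|_{L^p(\overline{B_1(0)})}^p,
\]
that is, $\|u-\lambda\|_{L^p(\overline{B_\varepsilon(x)})}=\varepsilon^{Q/p}\,\|u_\varepsilon-\lambda\|_{L^p(\overline{B_1(0)})}$. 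This covers $p=1$ as well, since for $u\in C^0(\overline{B_\varepsilon(x)})$ the composition $u_\varepsilon$ is again continuous. For $p=\infty$ the argument is analogous: as $\Phi$ is a bi-measurable bijection carrying null sets to null sets, $\|u-\lambda\|_{L^\infty(\overline{B_\varepsilon(x)})}=\|u_\varepsilon-\lambda\|_{L^\infty(\overline{B_1(0)})}$ (equivalently, since $\Phi$ is a homeomorphism, the $\min$ and $\max$ appearing in~\eqref{inf} are preserved).

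Finally, since $\varepsilon^{Q/p}$ (respectively $1$ for $p=\infty$) is a positive constant independent of $\lambda$, the two functions $\lambda\mapsto\|u-\lambda\|_{L^p(\overline{B_\varepsilon(x)})}$ and $\lambda\mapsto\|u_\varepsilon-\lambda\|_{L^p(\overline{B_1(0)})}$ differ by this multiplicative constant and so attain their minimum at the same value of $\lambda$. By the uniqueness of the minimizer, Theorem~\ref{prop}(1), this common minimizer equals both $\mu_p(\varepsilon,u)(x)$ and $\mu_p(1,u_\varepsilon)(0)$, giving the claimed identity. The only genuinely geometric ingredient is the ball correspondence in the first step; I expect the main point to be the careful bookkeeping of the dilation exponents, namely the degree-one homogeneity of the pseudonorm against the $\varepsilon^Q$ scaling of the Lebesgue measure, while the rest is a routine change of variables valid uniformly in $p$.
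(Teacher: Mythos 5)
Your proposal is correct and follows essentially the same route as the paper: the change of variables $y=x\cdot\delta_\varepsilon(z)$, the left-invariance and $\varepsilon^Q$-scaling of the Lebesgue measure from Proposition~\ref{prop-leb-meas}, and the uniqueness of the minimizer from Theorem~\ref{prop}(1). The paper's proof is a one-line computation that leaves the ball correspondence implicit, whereas you spell it out; the substance is identical.
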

\begin{proof}
For every $\lambda\in \bbR$ and $1\leq p<\infty$ it holds:
\[
\|u-\lambda\|^p_{L^p(B_{\varepsilon}(x))}=\int_{B_{\varepsilon}(x)}|u(\xi)-\lambda|^p\, d\xi=\varepsilon^{\sigma_1+\cdots+\sigma_k}\int_{B_{1}(0)}|u_{\varepsilon}(\xi)-\lambda|^p\, d\xi=\varepsilon^{v_1+2v_2+\cdots+kv_k}\|u_{\varepsilon}-\lambda\|^p_{L^p(B_{1}(0))}
\]
and 
\[
\|u-\lambda\|_{L^\infty(B_{\varepsilon}(x))}=\|u_{\varepsilon}-\lambda\|_{L^{\infty}(B_{1}(0))}
\]
and the conclusion follows by (1) in Theorem \ref{prop}. 
\end{proof}
Next we state carefully what is meant by the statement that the asymptotic expansion of the function $u$ in terms of $\mu_p$ holds in the viscosity sense, see~\eqref{minimizer-mu-p} and Definition~\ref{inequal_viscosity_definition}. First, we need the following auxiliary definition.
\begin{defn} Let $h$ be a real valued function defined in a neighborhood of zero.
We say that
$$h(x)\le o(x^2)\text{ as } x\to0^+$$ if any of the three equivalent conditions is satisfied:
\begin{itemize}
\item[a)] $\displaystyle\limsup_{x\to 0^+} \frac{h(x)}{x^2}\le 0$, 
\item[b)] there exists a nonnegative function $g(x)\ge 0$ such that $ h(x)+g(x)=o(x^2) \text{ as } x\to0^+,$
\item[c)] $\displaystyle\lim_{x\to 0^+} \frac{h^+(x)}{x^2}\le 0.$
\end{itemize}
\end{defn}
A similar definition is given for $h(x) \ge o(x^2)\text{ as }x\to0^+$
by reversing the inequalities in a) and c), requiring that $g(x) \le 0$ in b)
and replacing $h^+$ by $h^-$ in c)\footnote{ As usual, we denote by $h^+(x):=\max\{h(x),0\}$ and $h^-(x):=-\min\{h(x),0\}$.}.

Let $f$ and $g$ be two real valued functions defined in a neighborhood of $x_0\in\bbR.$ We say that \emph{$f$ and $g$
are asymptotic functions for $x\to x_0$}, if there exists a function $h$ defined in a neighborhood $V_{x_0}$of $x_0$ such that:
\begin{itemize}
\item[(i)]  $f(x)=g(x)h(x)$ for all $x\in V_{x_0}\setminus\{x_0\}$.
\item[(ii)] $\lim_{x\to x_0}h(x)=1.$
\end{itemize} 

If $f$ and $g$ are asymptotic for $x\to x_0$, then we simply write $f\sim g$ as $x\to x_0.$

\begin{defn}\label{inequal_viscosity_definition}
A continuous function defined in a neighborhood of a point $x\in \bbG$, satisfies
\begin{equation*}%\label{asymptotic2}
u(x)=\mu_p(\varepsilon,u)(x)+o(\epsilon^2),
\end{equation*}
as $\epsilon\to 0^+$
in the viscosity sense, if the following conditions hold:
\begin{itemize}
\item[(i)] for every continuous function $\phi$ defined in a neighborhood of a point $x$ such that $u-\phi$ has a strict minimum at  $x$  with $u(x)=\phi(x)$ and $\nabla_{V_1} \phi(x)\neq 0$ , we have
$$
\phi(x)\ge\mu_p(\varepsilon,\phi)(x)+ o(\epsilon^2),\quad\hbox{as } \epsilon \to 0^+.
$$
\item[(ii)] for every continuous function $\phi$ defined in a neighborhood of a point $x$ such that $u-\phi$ has a strict maximum at   $x$  with $u(x)=\phi(x)$ and $\nabla_{V_1} \phi(x)\neq 0$, then
$$
\phi(x)\le\mu_p(\varepsilon,\phi)(x)+ o(\epsilon^2),\quad\hbox{as } \epsilon \to 0^+.
$$
%as $\epsilon \to 0^+$.
\end{itemize}
\end{defn}

\section{The proof of Theorem~\ref{main-thm}}

%\komT{Sections with Heisenberg group and Carnot groups of step $2$ are moved after the end of the document.}
%\subsection{Carnot groups of higher step}
%In what follows we are going to find a counterpart of Theorem \ref{main-thm} in the setting of Carnot groups of arbitrary step %$k$. 
%a generalization of Lemma \ref{lem-main}. Observe, that a reasonable counterpart of the formula for the quadratic function $q%$ would arise from the Taylor expansion in step $k$ CC groups.

In order to prove Theorem~\ref{main-thm}, we need the following key lemma.

% \komB{Below we deal with a step $k$ Carnot group $\bbG$ (recall Definition~\ref{defiC}). Recall, that we compute the pseudodistance to $0$ of an element $x=(x^{(1)},\ldots, x^{(k)}) \in \bbG$ by using formula \eqref{distanza}:
%\[
% |x|_{\bbG}= \left( \sum_{j=1}^k \norm{x^{(j)}}_{\bbR^j}^{\frac{2k!}{j}} \right)^{\frac{1}{2k!}},
%\]
%where $\norm{x}_{\bbR^j}$ denotes the Euclidean norm of $x\in\mathbb{R}^j$. IS THIS NEEDED?}

\begin{lem}[cf. Lemma 3.1 in~\cite{Mag}]\label{lem-main-cc}
Let $\bbG$ be a Carnot group of step $k$. Moreover, let $\Om \subset \bbG$ be an open set and  $x\in \Om$ be a point such that $B_{\varepsilon}(x)\subset \Om$ for all small enough $\ep\leq \ep_0(x)$. Let $1 < p \leq \infty$ and $\xi\in \R^{v_1}\setminus\{0\}$, $\eta \in \R^{v_2}$. Let further $A$ be a symmetric $v_1 \times v_1$ matrix with trace ${\rm tr}(A)$. Moreover, consider the quadratic function  $q:B_{\varepsilon}(x)\to \mathbb{R}$ given by 
\begin{equation}\label{lem-def-q-poly-cc}
q(y)=q(x)+\langle\xi, (x^{-1}y)^{(1)} \rangle_{\R^{v_1}}+ \langle\eta, (x^{-1}y)^{(2)} \rangle_{\R^{v_2}}+\frac{1}{2}\langle A (x^{-1}y)^{(1)}, (x^{-1}y)^{(1)}\rangle_{\R^{v_1}},\quad y\in B_{\varepsilon}(x),
\end{equation}
where $(x^{-1}y)^{(1)}$ and $(x^{-1}y)^{(2)}$ are the horizontal and the vertical components of $x^{-1}y$, respectively and $\langle \cdot,\cdot\rangle_{\bbR^{v_1}}$ and $\langle \cdot,\cdot\rangle_{\bbR^{v_2}}$ denote the Euclidean scalar products on $\bbR^{v_1}$ and $\bbR^{v_2}$, respectively.
It then follows that
\begin{equation}\label{main-thm-assert-cc}
 \mu_p(\varepsilon,q)(x) = q(x) + \varepsilon^2 c \left(  {\rm tr}(A) + (p-2) \frac{\langle A\xi,\xi \rangle_{\bbR^{v_1}}}{|\xi|^2}\right)   + o(\varepsilon^2),
 \end{equation}
where
\[
c:=c(p,v_1,\ldots,v_k)=\frac{ 1}{2 (p+v_1)}  \frac{ \calB \left( \frac{v_k}{2(k-1)!} , \frac{p + \sum_{j=1}^{k-1} j v_j}{2(k-1)!}+1\right) }{\calB \left( \frac{v_k}{2(k-1)!} , \frac{p-2 + \sum_{j=1}^{k-1} j v_j}{2(k-1)!}+1\right)}  \prod_{j=2}^{k-1} \frac{\calB\left( \frac{jv_j}{2k!}, \frac{p +\sum_{i=1}^{j-1} iv_i}{2k!} +1\right)}{\calB\left( \frac{jv_j}{2k!}, \frac{p-2 +\sum_{i=1}^{j-1} iv_i}{2k!} +1\right) }
\] 
and $\calB \left(x,y\right)$ denotes the Beta function $\calB\left(x,y\right)=\int_0^1 t^{x-1}(1-t)^{y-1}\, dt$ for all $x,y>0$.
Furthermore, if $u\in C^2(\Om)$ with $\nabla_{V_1} u(x)\not =0$, then 
\begin{equation}\label{main-thm-assert2-cc}
 \mu_p(\varepsilon, u)(x)=u(x)+c \Delta^N_{p, \G}u(x)\varepsilon^2+o(\varepsilon^2),\quad\hbox{as}\quad \varepsilon\to 0^{+}.
\end{equation} 
\end{lem}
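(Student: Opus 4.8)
The plan is to reduce everything to a single computation on the unit pseudoball and then evaluate the resulting integrals. First I would exploit the scaling and translation invariance recorded in Corollary~\ref{cor} and Theorem~\ref{prop}(4)--(5). Writing $q_\varepsilon(z):=q(x\delta_\varepsilon(z))$ for $z\in\overline{B_1}(0)$ and using that the dilation acts on the $j$-th layer by $\varepsilon^{\sigma_j}=\varepsilon^{j}$, the quadratic $q$ becomes $q_\varepsilon(z)=q(x)+\varepsilon\langle\xi,z^{(1)}\rangle_{\R^{v_1}}+\varepsilon^2\big(\langle\eta,z^{(2)}\rangle_{\R^{v_2}}+\tfrac12\langle Az^{(1)},z^{(1)}\rangle_{\R^{v_1}}\big)$, so that $\mu_p(\varepsilon,q)(x)=q(x)+\mu_p(1,\tilde q_\varepsilon)(0)$ with $\tilde q_\varepsilon:=q_\varepsilon-q(x)$. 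Thus it suffices to find the asymptotics as $\varepsilon\to0^+$ of the single real number $\mu:=\mu_p(1,\tilde q_\varepsilon)(0)$.

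For $1<p<\infty$ I would use the characterization~\eqref{char}, i.e. $\int_{B_1(0)}|\tilde q_\varepsilon-\mu|^{p-2}(\tilde q_\varepsilon-\mu)\,dz=0$. Here I use that $\overline{B_1}(0)$ is invariant under the reflection of any single layer $z^{(j)}\mapsto-z^{(j)}$, since $|\cdot|_{\bbG}$ depends only on the Euclidean norms $\norm{z^{(j)}}$. I expect $\mu=o(\varepsilon)$: inserting $\mu=\varepsilon\mu_1+o(\varepsilon)$ forces $\int_{B_1}|\langle\xi,z^{(1)}\rangle-\mu_1|^{p-2}(\langle\xi,z^{(1)}\rangle-\mu_1)\,dz=0$ at leading order, and since $z^{(1)}\mapsto-z^{(1)}$ makes $\langle\xi,z^{(1)}\rangle$ odd, strict monotonicity in $\mu_1$ gives $\mu_1=0$. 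Writing then $\mu=\varepsilon^2 m+o(\varepsilon^2)$ and $Q:=\langle\eta,z^{(2)}\rangle+\tfrac12\langle Az^{(1)},z^{(1)}\rangle$, I expand the nonlinearity $s\mapsto|s|^{p-2}s$ to first order about $\varepsilon\langle\xi,z^{(1)}\rangle$. The resulting $O(\varepsilon^{p-1})$ term integrates to zero by the same reflection, and vanishing of the $O(\varepsilon^{p})$ term yields
\[
m=\tfrac12\,\frac{\int_{B_1(0)}\langle Az^{(1)},z^{(1)}\rangle\,|\langle\xi,z^{(1)}\rangle|^{p-2}\,dz}{\int_{B_1(0)}|\langle\xi,z^{(1)}\rangle|^{p-2}\,dz},
\]
the $\eta$-contribution having dropped out under $z^{(2)}\mapsto-z^{(2)}$.

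Next I would evaluate this ratio. Rotating the horizontal coordinates so that $\xi=|\xi|e_1$ replaces $A$ by $R^{\top}AR$, preserving $\mathrm{tr}(A)$ and turning $\langle A\xi,\xi\rangle/|\xi|^2$ into the $(1,1)$ entry; the reflections $z^{(1)}_i\mapsto-z^{(1)}_i$ kill all off-diagonal terms of $\langle Az^{(1)},z^{(1)}\rangle$, and permutation symmetry of the remaining horizontal coordinates reduces the problem to three scalar integrals. Integrating out the higher layers $z^{(2)},\dots,z^{(k)}$ (rescaling $z^{(j)}$ by $(1-\norm{z^{(1)}}^{2k!})^{j/(2k!)}$, with the overall constant cancelling in the ratio) converts each one into a weighted integral over the unit ball of $\R^{v_1}$ with weight $(1-\norm{w}^{2k!})^{\alpha}$, $\alpha=\sum_{j=2}^k jv_j/(2k!)$; polar coordinates and the substitution $t=\rho^{2k!}$ then produce Beta functions, while the spherical factors satisfy $\int_{S^{v_1-1}}|\omega_1|^{p-2}\omega_2^2\,d\sigma=\tfrac1{p+v_1-2}\int_{S^{v_1-1}}|\omega_1|^{p-2}\,d\sigma$ and $\int_{S^{v_1-1}}|\omega_1|^{p}\,d\sigma=(p-1)\int_{S^{v_1-1}}|\omega_1|^{p-2}\omega_2^2\,d\sigma$. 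Together these give $m=c\big(\mathrm{tr}(A)+(p-2)\langle A\xi,\xi\rangle/|\xi|^2\big)$, and peeling the layers one at a time assembles $c$ into the stated Beta-function product. I expect this precise bookkeeping, together with justifying the first-order expansion of $|s|^{p-2}s$ when $1<p<2$ (where $|\langle\xi,z^{(1)}\rangle|^{p-2}$ is singular but integrable, so dominated convergence applies), to be the main obstacle; the case $p=\infty$ I would treat separately through the min--max formula~\eqref{inf}, estimating $\min_{\overline{B_1}}\tilde q_\varepsilon$ and $\max_{\overline{B_1}}\tilde q_\varepsilon$ to order $\varepsilon^2$.

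Finally, for~\eqref{main-thm-assert2-cc} I would apply the second-order Taylor expansion (the Proposition above, whose $o(\norm{x^{-1}y}^2)$ remainder already holds for $u\in C^2$) at $x$ to produce a quadratic $q$ of the form~\eqref{lem-def-q-poly-cc} with $\xi=\nabla_{V_1}u(x)$, $\eta=\nabla_{V_2}u(x)$ and $A=D_{V_1}^{2,*}u(x)$, so that $u=q+R$ with $\sup_{B_\varepsilon(x)}|R|=o(\varepsilon^2)$. Monotonicity and translation invariance (Theorem~\ref{prop}(3)--(4)) then sandwich $\mu_p(\varepsilon,u)$ between $\mu_p(\varepsilon,q)\pm\sup_{B_\varepsilon(x)}|R|$, giving $\mu_p(\varepsilon,u)=\mu_p(\varepsilon,q)+o(\varepsilon^2)$. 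Since $\mathrm{tr}(D_{V_1}^{2,*}u)=\Delta_{\bbG}u$ and $\langle A\xi,\xi\rangle/|\xi|^2=\langle D_{V_1}^{2,*}u\,\tfrac{\nabla_{V_1}u}{|\nabla_{V_1}u|},\tfrac{\nabla_{V_1}u}{|\nabla_{V_1}u|}\rangle$, the identity $\Delta^N_{p,\G}u(x)=\mathrm{tr}(A)+(p-2)\langle A\xi,\xi\rangle/|\xi|^2$ holds for $1<p<\infty$ (and $p=\infty$ follows analogously via~\eqref{infty-norm-lapl} and~\eqref{inf}), so~\eqref{main-thm-assert-cc} turns into~\eqref{main-thm-assert2-cc}.
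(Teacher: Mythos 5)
Your proposal follows essentially the same route as the paper: reduce to the unit pseudoball via Corollary~\ref{cor} and Theorem~\ref{prop}(4)--(5), use the characterization \eqref{char} together with the layerwise reflection symmetries of $B_1(0)$ to identify $\lim_{\varepsilon\to0^+}(\mu_p(\varepsilon,q)(x)-q(x))/\varepsilon^2$ as a ratio of integrals of $\tfrac12\langle Az^{(1)},z^{(1)}\rangle$ against the weight $|\langle\xi,z^{(1)}\rangle|^{p-2}$, evaluate that ratio by peeling off the layers in polar coordinates with a power substitution to produce the Beta-function product, treat $p=\infty$ through the min--max formula \eqref{inf}, and deduce \eqref{main-thm-assert2-cc} from the Taylor expansion plus the monotonicity sandwich of Theorem~\ref{prop}(3)--(4). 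Your bookkeeping via the spherical identities $\int_{S^{v_1-1}}|\omega_1|^{p-2}\omega_2^2\,d\sigma=\tfrac1{v_1+p-2}\int_{S^{v_1-1}}|\omega_1|^{p-2}\,d\sigma$ and $\int_{S^{v_1-1}}|\omega_1|^{p}\,d\sigma=(p-1)\int_{S^{v_1-1}}|\omega_1|^{p-2}\omega_2^2\,d\sigma$ is correct and is a slightly cleaner alternative to the paper's recursion in the exponents $\theta_j$; both produce the factor $\mathrm{tr}(A)+(p-2)\langle A\xi,\xi\rangle/|\xi|^2$ and the same constant $c$.

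Three steps are glossed over in a way that matters. First, writing $\mu=\varepsilon^2m+o(\varepsilon^2)$ is an ansatz: you must show beforehand that $\gamma_\varepsilon:=(\mu_p(\varepsilon,q)(x)-q(x))/\varepsilon^2$ is bounded and that all subsequential limits coincide. The paper obtains boundedness by rewriting the characterizing equation, via the fundamental theorem of calculus applied to $f(s)=|s|^{p-2}s$, so that $\gamma_\varepsilon$ appears as a weighted average of $\tfrac12\langle Cy^{(1)},y^{(1)}\rangle+\langle\eta,y^{(2)}\rangle$ with nonnegative weight $\int_0^1f'(\cdot)\,dt$. Second, for $1<p<2$ plain dominated convergence does \emph{not} justify the limit passage you need: the natural majorant of that weight is $\bigl||y_1|-2c\varepsilon\bigr|^{p-2}$, which is not dominated uniformly in $\varepsilon$ by an integrable function near $\{y_1=0\}$; the paper splits $B$ into $\{|y_1|>\theta\}$ and $\{|y_1|\le\theta\}$, passes to the limit on the first piece, and shows the second contributes only $O(\varepsilon^{p-1})$. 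Third, for $p=\infty$ the coefficient $\tfrac12\langle A\xi,\xi\rangle/|\xi|^2$ requires knowing that the extremal points of $g(y)=\langle\xi,y^{(1)}\rangle+\langle\eta,y^{(2)}\rangle+\tfrac12\langle Ay^{(1)},y^{(1)}\rangle$ on $\partial B_\varepsilon(0)$ have horizontal components asymptotic to $\pm\varepsilon\xi/|\xi|$; the paper imports this from Lemmas 1.5--1.6 of the Ferrari--Pinamonti reference rather than from a direct second-order estimate of the min and max. None of these gaps is fatal to your plan, but each requires an argument you have not supplied.
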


\begin{rem}\label{rem-2.15}
The formula describing the constant $c(p,v_1,\ldots, v_k)$ is complicated and not easily simplified using the properties of the Beta function.
\end{rem}

Before we prove the lemma, let us discuss its assertion in some particular cases:
\begin{ex}[The Euclidean space $\bbR^N$]\label{sect3-ex2}
If $\bbG$ is the Euclidean space $\bbR^N$ then $c(p,v_1,\ldots, v_k)$ agrees with the constant computed in \cite{Mag}, namely  $$c(p,N)=\frac{1}{2(p+N)}.$$
\end{ex}

\begin{ex} [The Heisenberg group $\Hei$, cf.~Example~\ref{ex-heisenberg}]
 If $\G=\Hei$, then quadratic function $q$ in~\eqref{lem-def-q-poly-cc} takes the form:
 \begin{equation*}
q(y)=q(x)+\langle\xi, (x^{-1}y)^{(1)} \rangle+ w (x^{-1}y)^{(2)}+\frac{1}{2}\langle A (x^{-1}y)^{(1)},(x^{-1}y)^{(1)}\rangle_{\bbR^2}, \quad y\in B_{\varepsilon}(x),
\end{equation*}
where $w\in \R$, $\xi\in \R^2\setminus\{0\}$. Furthermore, the constant $c=c(p)$ appearing in \eqref{main-thm-assert-cc} and \eqref{main-thm-assert2-cc} takes the following form
\[
 c(p)=\frac{2}{(p+2)(p+4)}\left( \frac{\Gamma\left( \frac{p+6}{4} \right)}{\Gamma \left( \frac{p+4}{4}\right)} \right)^2,
\]
where for $t>-1$, $\Gamma(t)=\int_0^{\infty} x^{t-1} e^{-x}\, dx$ is the Gamma function. 
\end{ex}

\begin{ex}[Carnot groups of step 2]\label{sect3-ex4}
  Let $\G$ be a Carnot group of step $2$, then the quadratic function $q$ in~\eqref{lem-def-q-poly-cc} takes the form:
 \begin{equation*}
q(y)=q(x)+\langle\xi, (x^{-1}y)^{(1)} \rangle_{\R^n}+ \langle\eta, (x^{-1}y)^{(2)} \rangle_{\R^k}+\frac{1}{2}\langle A (x^{-1}y)^{(1)}, (x^{-1}y)^{(1)}\rangle_{\R^n},\quad y\in B_{\varepsilon}(x),
\end{equation*}
that is $v_1=n, v_2=k$, $\xi\in \R^n\setminus\{0\}$ and $\eta\in \R^k$. Moreover, the constant $c=c(p,n,k)$, appearing in \eqref{main-thm-assert-cc} and \eqref{main-thm-assert2-cc}, takes the following form
\[
 c(p,n,k):= \frac{1}{2(n+p) } \frac{\calB \left( \frac{k}{2}, \frac{n+p+4}{4}\right) }{\calB \left( \frac{k}{2}, \frac{n+p+2}{4}\right) }.
\]
\end{ex}

 In the proof of Lemma~\ref{lem-main-cc} we employ the following integral formula.
\begin{lem}\label{lemdir}
Let $\alpha_1, \ldots, \alpha_n$ be real numbers such that $\alpha_i>-1$ for $i=1,\ldots, n$. It then follows that
\begin{equation}\label{dir}
\int\limits_{T_n} x_1^{\alpha_1} \cdot \ldots \cdot x_n^{\alpha_n} dx = \frac{1}{2^n} \frac{\prod_{i=1}^n  \Gamma( \frac{\alpha_i+1}{2}) }{ \Gamma ( \frac{n+2+\sum \alpha_i }{2})}
\end{equation}
where  $T_n:= \left\{(x_1,\ldots,x_n) \in \R^n: x_1^2 + \ldots + x_n^2 <1,\, x_i \geq 0 \text{ for } i=1,\ldots,n \right\}.$
\end{lem}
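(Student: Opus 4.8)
The plan is to prove \eqref{dir} by the classical Gaussian regularization trick, which reduces the integral over the spherical octant $T_n$ to a product of one-dimensional $\Gamma$-integrals. The starting observation is the elementary identity, valid for $\alpha>-1$,
\[
\int_0^\infty s^{\alpha} e^{-s^2}\,ds=\tfrac12\,\Gamma\!\left(\tfrac{\alpha+1}{2}\right),
\]
obtained by the substitution $t=s^2$. Since the exponential weight $e^{-|x|^2}=\prod_i e^{-x_i^2}$ splits as a product and the octant $[0,\infty)^n$ is a product domain, Fubini's theorem immediately yields
\[
\int_{[0,\infty)^n} x_1^{\alpha_1}\cdots x_n^{\alpha_n}\, e^{-|x|^2}\,dx=\prod_{i=1}^n \tfrac12\,\Gamma\!\left(\tfrac{\alpha_i+1}{2}\right)=\frac{1}{2^n}\prod_{i=1}^n\Gamma\!\left(\tfrac{\alpha_i+1}{2}\right).
\]
This produces the numerator of the claimed formula; the remaining task is to extract the denominator by relating this Gaussian integral to the integral over $T_n$.

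The second step is to evaluate both the Gaussian integral and the target integral in spherical coordinates $x=r\omega$, $r=|x|$, $\omega\in\Sigma:=\{\omega\in S^{n-1}:\omega_i\ge0\}$, with $dx=r^{n-1}\,dr\,d\sigma(\omega)$. Writing $d:=\sum_{i=1}^n\alpha_i$ for the degree of homogeneity of the integrand and $S:=\int_\Sigma \omega_1^{\alpha_1}\cdots\omega_n^{\alpha_n}\,d\sigma(\omega)$ for the common angular factor, the radial and angular parts separate and one finds
\[
\int_{[0,\infty)^n} x^\alpha e^{-|x|^2}\,dx=\Big(\int_0^\infty r^{d+n-1}e^{-r^2}\,dr\Big)\,S=\tfrac12\,\Gamma\!\left(\tfrac{d+n}{2}\right)S,
\qquad
\int_{T_n} x^\alpha\,dx=\Big(\int_0^1 r^{d+n-1}\,dr\Big)\,S=\frac{S}{d+n}.
\]
Eliminating the unknown angular integral $S$ between these two identities, and combining with the factorization from the first step, gives
\[
\int_{T_n} x^\alpha\,dx=\frac{1}{d+n}\cdot\frac{1}{2^{n-1}}\,\frac{\prod_{i=1}^n\Gamma\!\left(\tfrac{\alpha_i+1}{2}\right)}{\Gamma\!\left(\tfrac{d+n}{2}\right)}.
\]
Setting $s:=\tfrac{d+n}{2}$ so that $d+n=2s$, and invoking the functional equation $\Gamma(s+1)=s\,\Gamma(s)$, the prefactor simplifies as $\tfrac{1}{2s\,\Gamma(s)}=\tfrac{1}{2\,\Gamma(s+1)}$, which collapses the expression to $\tfrac{1}{2^n}\prod_i\Gamma(\tfrac{\alpha_i+1}{2})/\Gamma\!\left(\tfrac{n+2+d}{2}\right)$, exactly the right-hand side of \eqref{dir}.

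The only delicate point is the integrability needed to legitimize the manipulations: when some $\omega_i\to0$ the angular weight $\omega^\alpha$ blows up, so the finiteness of $S$ (and of the Gaussian integral) must be justified from the hypothesis $\alpha_i>-1$. I expect this to be the main technical obstacle, and the clean way to handle it is to carry out the entire computation first on the truncated domains $\{x:x_i\ge\delta\text{ for all }i\}$ and then let $\delta\to0^+$: all integrands are nonnegative, so the passage to the limit is justified by monotone convergence, and the one-dimensional integrals $\int_0^\infty s^{\alpha_i}e^{-s^2}\,ds$ converge precisely because $\alpha_i>-1$. As a fallback, one may instead prove \eqref{dir} by induction on $n$, integrating out $x_n$ over $[0,\sqrt{1-x_1^2-\cdots-x_{n-1}^2}\,]$ to produce a Beta-function factor and rescaling the remaining variables to recover $T_{n-1}$ with shifted exponents; this avoids surface measure altogether but requires slightly more bookkeeping in the rescaling.
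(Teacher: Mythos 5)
Your proof is correct, but it takes a genuinely different route from the paper. The paper computes $\int_{T_n} x_1^{\alpha_1}\cdots x_n^{\alpha_n}\,dx$ head-on: it parametrizes $T_n$ by explicit spherical coordinates, separates the radial factor, and evaluates each of the $n-1$ angular integrals via the half-angle identity $\int_0^{\pi/2}\sin^a\varphi\,\cos^b\varphi\,d\varphi=\tfrac12\,\calB\bigl(\tfrac{a+1}{2},\tfrac{b+1}{2}\bigr)$, obtaining a telescoping product of Beta functions that collapses to the right-hand side of \eqref{dir} after converting $\calB(x,y)=\Gamma(x)\Gamma(y)/\Gamma(x+y)$. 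You instead use the classical Gaussian regularization: Fubini on $[0,\infty)^n$ with the weight $e^{-|x|^2}$ produces the product $\tfrac{1}{2^n}\prod_i\Gamma(\tfrac{\alpha_i+1}{2})$ at once, and comparing the two polar decompositions eliminates the angular integral $S$ without ever computing it, the prefactor being absorbed by $\Gamma(s+1)=s\,\Gamma(s)$. Your approach is shorter and more conceptual — the Gamma functions appear directly and only homogeneity of degree $d=\sum_i\alpha_i$ is used — whereas the paper's computation is more elementary (only one-dimensional substitutions, no surface measure on the sphere) and yields strictly more information, namely the explicit value of the angular integral $S$ as a product of Beta factors. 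Your worry about integrability is handled adequately: since the polar-coordinates identity holds for nonnegative measurable integrands (Tonelli), the finiteness of the Gaussian integral already forces $S<\infty$, so your truncation-plus-monotone-convergence argument, while perfectly valid, is not even strictly necessary; and your fallback induction on $n$, peeling off one variable at a time with a Beta factor, is essentially the paper's iterated computation in disguise.
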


\begin{proof}[Proof of Lemma~\ref{lemdir}]
Let $a,b>-1$. Upon applying the change of variables $t=\sin^2 x $, we obtain the following equation:
\[
\int_0^{\frac{\pi}{2}} \sin^a x \cos^b x dx=\int_0^1 t^\frac{a}{2} (1-t)^\frac{b}{2} \frac{1}{2\sqrt{t} \sqrt{1-t}} dt = \frac12 \int_0^1 t^{\frac{a-1}{2}} (1-t)^\frac{b-1}{2} dt = \frac{1}{2} \calB \left( \frac{a+1}{2}, \frac{b+1}{2}\right).
\]

Now we are in a position to calculate the left-hand side of \eqref{dir}. We apply the spherical coordinates 
\[
\begin{cases}
  x_1 &= r \cos \varphi_1\\
  x_2 &= r \sin \varphi_1 \cos \varphi_2  \\
  x_3 &= r \sin \varphi_1 \sin \varphi_2 \cos \varphi_3\\
 \vdots & \vdots\\
 x_{n-1} &= r \sin \varphi_1 \sin \varphi_2 \cdot \ldots \cdot \cos \varphi_{n-1}\\
 x_n &= r \sin \varphi_1 \sin \varphi_2 \ldots \sin \varphi_{n-1}
\end{cases}
\]
%$x_1 = r \cos \varphi_1$, $x_2 = r \sin \varphi_1 \cos \varphi_2$, $x_3 = r \sin \varphi_1 \sin \varphi_2 \cos \varphi_3 $, ..., $x_{n-1} = r \sin \varphi_1 \sin \varphi_2 \cdot \ldots \cdot \cos \varphi_{n-1}$, $x_n = r \sin \varphi_1 \sin \varphi_2 \ldots \sin \varphi_{n-1}$ 
with the Jacobian determinant $|J| = r^{n-1} \sin^{n-2} \varphi_1 \sin^{n-3} \varphi_2 \cdot \ldots \cdot \sin \varphi_{n-2}$ and the spherical coordinates varying as follows: $r \in (0,1)$, $\varphi_i \in (0,\pi/2)$ for $i=1,\ldots, n-2$. The result is
\begin{align*}
\int_{T_n}  x_1^{\alpha_1} \cdot \ldots \cdot  x_n^{\alpha_n} dx & = \int_0^1 \int_0^{\frac{\pi}{2}} \!\ldots \int_0^{\frac{\pi}{2}} \bigg[ r^{\sum_{i=1}^n \alpha_i + n-1} \cdot \cos^{\alpha_1} \varphi_1 \sin^{\sum_{i=2}^n \alpha_i +n-2} \varphi_1  \\
& \cdot \cos^{\alpha_2} \varphi_2 \sin^{\sum_{i=3}^n \alpha_i+n-3} \varphi_2 \cdot \ldots \cdot \cos^{\alpha_{n-1}} \varphi_{n-1} \sin^{\alpha_n} \varphi_{n-1} \bigg] d\varphi_1 \ldots d\varphi_{n-1} dr \\
&= \frac{1}{n+\sum_{i=1}^n \alpha_i } \frac12 \calB \left(\frac{\sum_{i=2}^n \alpha_i + n -1}{2}, \frac{\alpha_1 + 1}{2} \right) \frac12 \calB \left(\frac{\sum_{i=3}^n \alpha_i+ n -2}{2}, \frac{\alpha_2 + 1}{2} \right) \\
&  \cdot \ldots \cdot \frac12 \calB \left(\frac{\alpha_n + 1}{2}, \frac{\alpha_{n-1} + 1}{2} \right),
\end{align*}
which is equal to the right-hand side of \eqref{dir} upon using the well-known formula  $\calB \left(x,y\right)=\frac{\Gamma(x)\Gamma(y)}{\Gamma(x+y)}$.
\end{proof}

\begin{proof}[Proof of Lemma~\ref{lem-main-cc}]
In the proof we follow the steps of the proof of Lemma 3.1 in~\cite{Mag}. However, since the setting of Carnot groups differs from the Euclidean one, the computations are to some extent, more demanding and nontrivial.

We begin with computing $\mu_p(\varepsilon,q)$. For $z=(z^{(1)},\ldots,z^{(k)}) \in B:=B(0,1)$, we introduce the following functions:
\[
 q_\varepsilon (z) := q(x\delta_\varepsilon(z)), \quad v_\varepsilon(z) := \frac{q_\varepsilon(z)-q(x)}{\varepsilon} \quad \text{and} \quad v(z):=\langle \xi,(z_1,\ldots,z_{v_1}) \rangle_{\R^n} := \langle \xi , z^{(1)} \rangle_{\R^{v_1}}. 
\]
We know that $\mu_p(\varepsilon,q)(x)=\mu_p(1,q_\varepsilon)(0)$ by Corollary \ref{cor}. Then, by points (4) and (5) of Theorem \ref{prop}, we see that
\[
 \frac{\mu_p(\varepsilon,q)(x) - q(x) }{\varepsilon} = \mu_p(1,v_\varepsilon) (0).
 \]
Let us further observe that 
\begin{equation}\label{eq001}
 v_\varepsilon(z) = \langle \xi,z^{(1)} \rangle + \frac{\varepsilon}{2} \langle A z^{(1)},z^{(1)} \rangle+  \varepsilon \langle \eta, z^{(2)} \rangle  
\end{equation}
which shows that $v_\varepsilon$ converges uniformly to $v$ as $\varepsilon \to 0$ on $\overline{B}$. We appeal to the second part of claim (2) in Theorem \ref{prop} to obtain that $\mu_p(1,v_\varepsilon)(0) \to \mu_p(1,v)(0)$ as $\varepsilon \to 0$. Recall that the characterization of $\lambda=\mu_p(1,v)(0)$ given by \eqref{char} in Theorem \ref{prop} states that if $p\in [1,\infty)$, then $\lambda$ is the unique number such that
\[
 \int_B |\langle \xi , y^{(1)} \rangle -\lambda|^{p-2} (\langle \xi , y^{(1)} \rangle -\lambda) dy = 0.
 \]
On the other hand we have
\[
 \int_B |\langle \xi , y^{(1)} \rangle|^{p-2} (\langle \xi , y^{(1)} \rangle) dy = 0,
 \]
which follows from the symmetry of the unit ball and the following natural change of variables 
$$
\Phi(y^{(1)},y^{(2)},\ldots, y^{(k)}) = (-y^{(1)},y^{(2)},\ldots,y^{(k)}), \quad |J_\Phi|=1, \quad \Phi(B)=B.
$$
It now follows that $\mu_p(1,v)(0)=\lambda = 0$.

If $p=\infty$, then by \eqref{inf}:
\[ \mu_\infty(1,v)(0) = \frac12 \left(\min_{\overline{B}} \langle \xi , y^{(1)} \rangle + \max_{\overline{B}} \langle \xi , y^{(1)} \rangle \right) = \frac12 \left( -|\xi|+|\xi| \right)=0.\]
%%%%%%%%%%%%%%%%%%%%%%%%%%%%%%%%%%%%%%%%%

Next, we split the discussion into the cases depending on the value of $p$.
Let us define 
\[
 \gamma_\varepsilon := \frac{\mu_p(\varepsilon,q)(x)-q(x)}{\varepsilon^2}.
 \]
\subsection{Case 1: $1<p<\infty$.}

For the sake of brevity, we introduce a function $f(s) = |s|^{p-2}s$. Then, upon applying \eqref{char} to $\mu_p(1,v_\varepsilon)(0) = \varepsilon \gamma_\varepsilon$, we obtain
\[
 \int_B f( v_\varepsilon(z) - \varepsilon \gamma_\varepsilon)dz =0.
 \]
By using \eqref{eq001}, this can be transformed to the following expression:
\begin{equation}\label{eq002}
 \int_B f \left( \langle \xi ,z^{(1)} \rangle + \varepsilon \left( \frac{1}{2} \langle A z^{(1)}, z^{(1)} \rangle - \gamma_\varepsilon +\langle \eta, z^{(2)}\rangle \right) \right)dz = 0.
\end{equation}
Without loss of generality we may assume that $|\xi|=1$, since otherwise we can consider the quadratic function $\tilde{q} = q/|\xi|$. Let us apply the change of variables $z=(z^{(1)},z^{(2)},\ldots,z^{(k)}) = (Ry^{(1)},y^{(2)},\ldots,y^{(k)})$ in \eqref{eq002}, where $R$ is a $v_1\times v_1$ rotation matrix with $R^T \xi = e_1$ and $e_1$ denotes the first element of the canonical basis of $\bbR^{\nu_1}$. Set $C = R^T A R$, then \eqref{eq002} reads as
\[
 \int_B f \left(y_1 + \varepsilon \left( \frac{1}{2} \langle C y^{(1)}, y^{(1)} \rangle - \gamma_\varepsilon + \langle \eta, y^{(2)} \rangle \right)\right)dy=0. 
\]
Since $\int_B f(y_1) dy = 0$, it follows that for all $\ep>0$, we have: 
\[  \int_B \frac{1}{\varepsilon} \left( f \left(y_1 + \varepsilon \left( \frac{1}{2} \langle C y^{(1)}, y^{(1)} \rangle - \gamma_\varepsilon + \langle \eta, y^{(2)} \rangle \right)\right) - f(y_1) \right)dy=0. \]
Therefore,  by the Fundamental Theorem of Calculus, we have:
\begin{equation}\label{eq007}
 \int_B \left[ \int_0^1  f' \left( y_1 + t \varepsilon \left( \frac{1}{2} \langle C y^{(1)}, y^{(1)} \rangle - \gamma_\varepsilon + \langle \eta, y^{(2)} \rangle \right) \right) dt \right]  \left( \frac{1}{2} \langle C y^{(1)}, y^{(1)} \rangle - \gamma_\varepsilon + \langle \eta, y^{(2)} \rangle \right) dy =0.
 \end{equation}
Equality \eqref{eq007} implies that $\gamma_\varepsilon$ is a weighted mean value of the function $\frac{1}{2} \langle C y^{(1)}, y^{(1)} \rangle~+~\langle \eta, y^{(2)} \rangle $ over $B$ with respect to a weighted Lebesgue measure $w(y) dy$ for 
$$
w(y):= \int_0^1 f' \left( y_1 + t \varepsilon \left( \frac{1}{2} \langle C y^{(1)}, y^{(1)} \rangle - \gamma_\varepsilon + \langle \eta, y^{(2)} \rangle \right) \right) dt, \quad y\in B.
$$
The weight function $w$ is nonnegative since $f'(s) = (p-1)|s|^{p-2}\geq 0$. Therefore, $\gamma_\varepsilon$ is bounded by $c:=\norm{ \frac{1}{2} \langle C y^{(1)}, y^{(1)} \rangle + \langle \eta, y^{(2)} \rangle }_{L^\infty(B)}$. 

Let us consider any subsequence of $(\gamma_\varepsilon)$ converging to $\gamma_0$ as $\varepsilon \to 0^+$, which for the sake of brevity, we also denote by $(\gamma_\varepsilon)$.  Let us consider two cases. If $2 \leq p < \infty$, then for all $y\in B$ we obtain
\begin{align*}
& \left| \int_0^1 f' \left( y_1 + t \varepsilon \left( \frac{1}{2} \langle C y^{(1)}, y^{(1)} \rangle - \gamma_\varepsilon + \langle \eta, y^{(2)} \rangle \right) \right) dt   \left( \frac{1}{2} \langle C y^{(1)}, y^{(1)} \rangle - \gamma_\varepsilon + \langle \eta, y^{(2)} \rangle \right) \right| \\
& \leq 2c (p-1) \int_0^1  \left| y_1 + t \varepsilon \left( \frac{1}{2} \langle C y^{(1)}, y^{(1)} \rangle - \gamma_\varepsilon + \langle \eta, y^{(2)} \rangle \right) \right|^{p-2} dt  \leq 2c (p-1) (1+ 2c\varepsilon). 
\end{align*}
Therefore, by the dominated convergence theorem the sequence $(\gamma_\varepsilon)$ converges to

\begin{equation}\label{eq003}
 \gamma_0:= \lim_{\varepsilon \to 0} \gamma_\varepsilon = \frac{\int_B |y_1|^{p-2}\left( \frac12 \langle Cy^{(1)},y^{(1)} \rangle  +  \langle \eta, y^{(2)} \rangle \right) dy }{ \int_B |y_1|^{p-2} dy} .
\end{equation}

Let now $1<p<2$. Fix $0< \theta <1$ and split the integral \eqref{eq007} into two parts: over the set $G_\theta :=B \cap \{ |y_1| > \theta \} $ and $F_\theta :=B \cap \{ |y_1| \leq \theta \} $. Observe that for all $y \in G_\theta$ and for all $\varepsilon>0$ satisfying $2c\ep<\theta$, we have the following:
\begin{align*}
& \left| \int_0^1 f' \left( y_1 + t \varepsilon \left( \frac{1}{2} \langle C y^{(1)}, y^{(1)} \rangle - \gamma_\varepsilon + \langle \eta, y^{(2)} \rangle \right) \right) dt   \left( \frac{1}{2} \langle C y^{(1)}, y^{(1)} \rangle - \gamma_\varepsilon + \langle \eta, y^{(2)} \rangle \right) \right| \\
& \leq 2c  \left| |y_1|-2c \varepsilon \right|^{p-2}.
\end{align*}
Moreover,
\begin{equation}\label{eq-conv}
\lim_{\varepsilon \to 0} \int_{G_\theta} \left| |y_1|-2c \varepsilon \right|^{p-2} dy
= \int_{G_\theta} |y_1|^{p-2} dy< \int_{B} |y_1|^{p-2} dy,
\end{equation}
where the inequality holds uniformly for all $\theta \in (0,1)$. Furthermore, the last integral turns out to be finite which can be seen from the explicit calculation below in~\eqref{I}. Hence, by applying Theorem 5.4 in \cite{Mag} to $X= G_\theta$ with $\nu$ being the Lebesgue measure, we obtain the following:
\begin{align*}
& \lim_{\varepsilon \to 0} \int_{G_\theta} \int_0^1 f' \left( y_1 + t \varepsilon \left( \frac{1}{2} \langle C y^{(1)}, y^{(1)} \rangle - \gamma_\varepsilon + \langle \eta, y^{(2)} \rangle \right) \right) dt   \left( \frac{1}{2} \langle C y^{(1)}, y^{(1)} \rangle - \gamma_\varepsilon + \langle \eta, y^{(2)} \rangle \right) dy \\
& = \int_{G_\theta} (p-1) |y_1|^{p-2} \left( \frac{1}{2} \langle C y^{(1)}, y^{(1)} \rangle + \langle \eta, y^{(2)} \rangle - \gamma_0 \right).
\end{align*}
Observe that here the upper bound in \eqref{eq-conv} allows us to conclude that the limit as $\theta \to 0^+$ is finite. We now focus on the part of the integral in \eqref{eq007} involving the set $F_\theta$. Since  $|F_\theta|=\int_{F_{\theta}}1 dy$, then upon writing this integral as in \eqref{I}, one sees that $|F_\theta|=c(k, v_1,\ldots, v_k)\theta$, and so $|F_\theta|\to 0$, as $\theta \to 0^+$. Moreover, it suffices to consider $\theta=2c\ep$ and the related $\int_{F_{2c\ep}} \left| |y_1|-2c \varepsilon \right|^{p-2} dy$. We again appeal to integral \eqref{I} and reduce our computations to finding 
$$
\int_{B_{v_1}(0, R_1)\cap \{ |y_1| \leq 2c\ep \}} \left( 2c \varepsilon-|y_1| \right)^{p-2} dy^{(1)}.
$$
However, direct computation shows that this integral is of order $\ep^{p-1}$, which then allows us to let $\ep\to0^{+}$, and in turn conclude \eqref{eq003}.

In order to approach the proof of~\eqref{main-thm-assert-cc}, we first need to compute integrals in~\eqref{eq003}. We begin with computing the denominator of \eqref{eq003}. Once this is completed, the computation of the numerator will be more straightforward. We write
\begin{align}\label{I}
\begin{split}
I=\int_B |y_1|^{p-2} dy = \int\limits_{\small{B_{v_k}(0,1)}} \int\limits_{\small{B_{v_{k-1}}(0,R_{k-1})} }  \ldots \int\limits_{\small{B_{v_{2}}(0,R_{2})} }  \int\limits_{\small{B_{v_1}(0,R_1)}} |y_1|^{p-2} dy^{(1)} dy^{(2)} \ldots dy^{(k-1)} dy^{(k)},
\end{split}
\end{align}
where for $j=1,\ldots, k$, $B_{v_j}(0, R_j)$ denotes the Euclidean ball in $\R^{v_j}$ centered at $0$ with radius $R_{k}=1$. Furthermore, each radius $R_j>0$ is a function depending on the variables $y^{(i)}$ with $i >j$, with the following property:
\begin{align*}
R_{k-1}=&R_{k-1}(y^{(k)}) = \left(1-\norm{y^{(k)}}^{\frac{2k!}{k}} \right)^{\frac{k-1}{2k!}}\\
R_{k-2}=&R_{k-2}(y^{(k)},y^{(k-1}) = \left( 1- \norm{y^{(k)}}^{\frac{2k!}{k}} - \norm{y^{(k-1)}}^{\frac{2k!}{k-1}} \right)^{\frac{k-2}{2k!}} \\
\vdots & \\
R_{j}= &R_{j}( y^{(k)} \ldots, y^{(j+1)}) = \left(1- \norm{y^{(k)}}^{\frac{2k!}{k}} - \ldots - \norm{y^{(j+1)}}^{\frac{2k!}{j+1}} \right)^{\frac{j}{2k!}} \\
\vdots &\\
R_2  = &R_2(y^{(k)},\ldots, y^{(3)}) = \left(1- \sum_{i=3}^k \norm{y^{(i)}}^{\frac{2k!}{i}}\right)^{\frac{2}{2k!}}\\
R_1  =& R_1 (y^{(k)}, \ldots , y^{(2)}) =  \left(1- \sum_{i=2}^k \norm{y^{(i)}}^{\frac{2k!}{i}}\right)^{\frac{1}{2k!}}.
\end{align*}
Upon applying the scaling change of variables, followed by Lemma \ref{lemdir} with $\alpha_1=p-2$ and $\alpha_i=0$ for $i=2,\ldots, v_1$, we obtain the following equality:
\begin{align}
\int_{B_{v_1}(0,R_1)} |y_1|^{p-2} dy^{(1)} &= R_1^{v_1+p-2} \int_{B_{v_1}(0,1)} |y_1|^{p-2} dy^{(1)} = R_1^{v_1+p-2} 2^{v_1} \int_{T_{v_1}} y_1^{p-2} dy^{(1)} \nonumber \\
&= R_1^{v_1+p-2} \frac{\Gamma \left( \frac{p-1}{2} \right) \Gamma \left(\frac{1}{2} \right)^{v_1-1}}{\Gamma \left( \frac{v_1+p}{2} \right)}. \label{eq004}
\end{align}
Using \eqref{eq004} in $I$, we see that
\begin{equation}\label{I2}
I =\frac{\Gamma \left( \frac{p-1}{2} \right) \Gamma \left(\frac{1}{2} \right)^{v_1-1}}{\Gamma \left( \frac{v_1+p}{2} \right)} \int_{B_{v_k}(0,1)} \ldots \int_{B_{v_2}(0,R_2)} R_1^{v_1+p-2} dy^{(2)} \ldots dy^{(k)}.
\end{equation}
Since $R_1^{v_1+p-2}$ is a radial function with respect to $y^{(2)},\ldots, y^{(k)}$, in particular with respect to $y^{(2)}$, we use the spherical coordinates together with the observation that $R_1 = \left(R_2^{\frac{2k!}{2}} - \norm{y^{(2)}}^{\frac{2k!}{2}} \right)^{\frac{1}{2k!}}$ to obtain the following:
\begin{align*}
\int_{B_{v_2}(0,R_2)} R_1^{v_1+p-2} dy^{(2)} &= \frac{2 \sqrt{\pi}^{v_2}}{\Gamma \left( \frac{v_2}{2}\right) } \int_0^{R_2} \left(R_2^{\frac{2k!}{2}} - r^{\frac{2k!}{2}} \right)^{\frac{v_1+p-2}{2k!}} r^{v_2-1} dr\\
& =  \frac{2 \sqrt{\pi}^{v_2}}{\Gamma \left( \frac{v_2}{2}\right) } \int_0^1 R_2^{\frac{v_1+p-2}{2}} (1-s^{\frac{2k!}{2}})^{\frac{v_1+p-2}{2k!}} R_2^{v_2-1} s^{v_2-1 } R_2 ds  \qquad (R_2 s:= r)  \\
& = \frac{2 \sqrt{\pi}^{v_2}}{\Gamma \left( \frac{v_2}{2}\right) } R_2^{\frac{2v_2+v_1+p-2}{2}} \int_0^1 (1-s^{\frac{2k!}{2}})^{\frac{v_1+p-2}{2k!}} s^{v_2-1} ds\\
& = \frac{2 \sqrt{\pi}^{v_2}}{\Gamma \left( \frac{v_2}{2}\right) } R_2^{\frac{2v_2+v_1+p-2}{2}} \frac{2}{2k!} \int_0^1 (1-t)^{\frac{v_1+p-2}{2k!}} t^{\frac{2(v_2-1)}{2k!}} t^{\frac{2}{2k!}-1} dt  \qquad (t:=s^{\frac{2k!}{2}}) \\
&= \frac{2 \sqrt{\pi}^{v_2}}{\Gamma \left( \frac{v_2}{2}\right) } R_2^{\frac{2v_2+v_1+p-2}{2}} \frac{2}{2k!} \int_0^1 (1-t)^{\frac{v_1+p-2}{2k!}} t^{\frac{2v_2}{2k!} -1}  dt \\
&= \frac{4 \sqrt{\pi}^{v_2}}{ 2k! \Gamma \left( \frac{v_2}{2}\right) } R_2^{\frac{2v_2+v_1+p-2}{2}} \calB \left( \frac{2v_2}{2k!}, \frac{v_1+2k!+p-2}{2k!} \right).
\end{align*}
In summarise, we now have
\[ I= \frac{4 \Gamma \left( \frac{p-1}{2} \right) \sqrt{\pi}^{v_1+v_2-1}}{2k!\Gamma \left( \frac{v_1+p}{2} \right) \Gamma \left( \frac{v_2}{2}\right)} \calB\left( \frac{2v_2}{2k!}, \frac{v_1+2k!+p-2}{2k!} \right) \int_{B_{v_k}(0,1)} \ldots \int_{B_{v_3}(0,R_3)} R_2^{\frac{2v_2+v_1+p-2}{2}} dy^{(3)} \ldots dy^{(k)}.\]
In order to complete the computation of the iterated integral $I$, we need to proceed similarly to the previous case. As it turns out, the key step is to calculate the following integral:
\begin{equation}\label{I4}
\int_{B_{v_j} (0, R_j)} R_{j-1}^{\theta_j} dy^{(j)} 
\end{equation}
where $\theta_j >0$ is defined inductively for $j=2,3,\ldots,k-1$. From the previous computations we see that $\theta_2 = v_1+p-2$ and $\theta_3= \frac{2v_2+v_1+p-2}{2}$.

Let us observe, that from the construction of $R_j$, it follows that
\[ R_{j-1} = \left(R_j^{\frac{2k!}{j}} - \norm{y^{(j)}}^{\frac{2k!}{j}} \right)^{\frac{j-1}{2k!}}.\]
Hence
\begin{align*}
\int \limits_{B_{v_j} (0, R_j)} \!\!\!R_{j-1}^{\theta_j} dy^{(j)}  &= \!\!\! \int \limits_{B_{v_j}(0,R_j)} \!\!\! \left(R_j^{\frac{2k!}{j}} - \norm{y^{(j)}}^{\frac{2k!}{j}} \right)^{\frac{(j-1)\theta_j}{2k!}} \!\! dy^{(j)} = \frac{2 \sqrt{\pi}^{v_j}}{\Gamma \left(\frac{v_j}{2} \right)} \int_0^{R_j} \!\! \left(R_j^{\frac{2k!}{j}} - r^{\frac{2k!}{j}} \right)^{\frac{(j-1)\theta_j}{2k!}} \!\! r^{v_j-1}dr,
\end{align*}
which again follows by the integrand being radial. We apply the change of variables $R_j s:=r$ to obtain
\begin{align*}
\int_0^{R_j} \left(R_j^{\frac{2k!}{j}} - r^{\frac{2k!}{j}} \right)^{\frac{(j-1)\theta_j}{2k!}} r^{v_j-1}dr &= \int_0^1 \left(R_j^{\frac{2k!}{j}} - R_j^{\frac{2k!}{j}}s^{\frac{2k!}{j}} \right)^{\frac{(j-1)\theta_j}{2k!}} R_j^{v_j-1} s^{v_j-1} R_j ds \\
&= R_j^{\frac{(j-1)\theta_j + j v_j}{j}} \int_0^1 (1-s^{\frac{2k!}{j}})^{\frac{(j-1)\theta_j}{2k!}} s^{v_j-1} ds \\
&= R_j^{\frac{(j-1)\theta_j + j v_j}{j}} \int_0^1 (1-t)^{\frac{(j-1)\theta_j}{2k!}} t^{\frac{j(v_j-1)}{2k!}} \frac{j}{2k!} t^{\frac{j-2k!}{2k!}} dt \qquad (t:= s^{\frac{2k!}{j}}) \\
&= \frac{j}{2k!} R_j^{\frac{(j-1)\theta_j + j v_j}{j}} \int_0^1 (1-t)^{\frac{(j-1)\theta_j}{2k!}} t^{\frac{jv_j -2k!}{2k!}} dt \\
&=\frac{j}{2k!} R_j^{\frac{(j-1)\theta_j + j v_j}{j}} \calB \left( \frac{jv_j}{2k!}, \frac{(j-1)\theta_j}{2k!} +1\right).
\end{align*} 
Therefore $\theta_j$ is defined by the following recursive formula 
\[\theta_2 = v_1+p-2  \qquad \text{and}  \qquad \theta_{j+1} = v_j + \frac{j-1}{j} \theta_j, \quad  j=2,\dots,k-1,\]
which leads to the following explicit formula:
\begin{equation}\label{teta-key-lemma}
 \theta_{j+1} = \frac{p-2 +\sum_{i=1}^j iv_i}{j} .
 \end{equation}
Indeed, observe that
\[\frac{j-1}{j} \cdot \frac{p-2 +\sum_{i=1}^{j-1} iv_i}{j-1} + v_j = \frac{p-2 +\sum_{i=1}^j iv_i}{j}.\]
Now we are in a position to complete the calculation of the integral $I$, cf. \eqref{I} and \eqref{I2}:
\begin{align*}
I &= \frac{\Gamma \left( \frac{p-1}{2} \right) \sqrt{\pi}^{v_1-1}}{\Gamma \left( \frac{v_1+p}{2} \right)} \int\limits_{B_{v_k}(0,1)} \ldots \int\limits_{B_{v_2}(0,R_2)} R_1^{v_1+p-2} dy^{(2)} \ldots dy^{(k)} \\
&= \frac{\Gamma \left( \frac{p-1}{2} \right) \sqrt{\pi}^{v_1-1}}{\Gamma \left( \frac{v_1+p}{2} \right)} \frac{4 \sqrt{\pi}^{v_2}}{ 2k! \Gamma \left( \frac{v_2}{2}\right) }  \calB \left( \frac{2v_2}{2k!}, \frac{v_1+2k!+p-2}{2k!} \right)\!\! \int\limits_{B_{v_k}(0,1)}\!\! \ldots\!\! \int\limits_{B_{v_3}(0,R_3)}\!\!\!\! R_2^{\frac{2v_2+v_1+p-2}{2}} dy^{(3)} \ldots dy^{(k)}.
\end{align*}
Each inner integral of $R_{j-1}^{\theta_j}$ gives rise to the multiplicative constant 
\[ \frac{ \sqrt{\pi}^{v_j}}{\Gamma \left(\frac{v_j}{2} \right)} \frac{j}{k!} \calB \left( \frac{jv_j}{2k!}, \frac{(j-1)\theta_j}{2k!} +1\right) \]
in the value of the iterated integral. Therefore, we end up with
\begin{align*}
I&=\frac{\Gamma \left( \frac{p-1}{2}\right) \sqrt{\pi}^{-1 +\sum_{j=1}^{k-1} v_j}  (k-1)!}{(k!)^{k-1}\Gamma \left( \frac{v_1+p}{2}\right) \prod_{j=2}^{k-1} \Gamma \left( \frac{v_j}{2}\right)} \prod_{j=2}^{k-1} \calB\left( \frac{jv_j}{2k!}, \frac{(j-1)\theta_j}{2k!} +1\right) \int_{B_{v_k}(0,1)} R_{k-1}^{\theta_k} dy^{(k)}.
\end{align*}
Recall, that $\theta_k= \frac{p-2 + \sum_{j=1}^{k-1} j v_j}{k-1}$, $R_k = (1-\norm{y^{(k)}}^{\frac{2k!}{k}})^{\frac{k-1}{2k!}}$ and compute
\begin{align*} 
\int_{B_{v_k}(0,1)} R_{k-1}^{\theta_k} dy^{(k)} &=\int_{B_{v_k}(0,1)} (1-\norm{y^{(k)}}^{\frac{2k!}{k}})^{\frac{\theta_k (k-1)}{2k!}} dy^{(k)} \\
&= \frac{2 \sqrt{\pi}^{v_k}}{\Gamma \left( \frac{v_k}{2} \right)} \int_0^1 (1-r^{\frac{2k!}{k}})^{\frac{\theta_k (k-1)}{2k!}} r^{v_k-1} dr \qquad (s:=r^{\frac{2k!}{k}}) \\
&= \frac{2 \sqrt{\pi}^{v_k}}{\Gamma \left( \frac{v_k}{2} \right)} \frac{1}{2(k-1)!} \int_0^1 (1-s)^{\frac{\theta_k(k-1)}{2k!}} s^{\frac{v_k-1}{2(k-1)!}} s^{\frac{1}{2(k-1)!}-1} ds \\
&= \frac{\sqrt{\pi}^{v_k}}{\Gamma \left( \frac{v_k}{2} \right) (k-1)!} \int_0^1 (1-s)^{\frac{\theta_k(k-1)}{2k!}} s^{\frac{v_k-2(k-1)!}{2(k-1)!}} ds \\
&= \frac{\sqrt{\pi}^{v_k}}{\Gamma \left( \frac{v_k}{2} \right) (k-1)!}  \calB \left( \frac{v_k}{2(k-1)!} , \frac{\theta_k (k-1)}{2(k-1)!}+1\right).
\end{align*}
Hence we arrive at
\begin{equation}\label{I3}
I = \frac{\Gamma \left( \frac{p-1}{2}\right) \sqrt{\pi}^{-1 +\sum_{i=1}^{k} v_i} }{(k!)^{k-1}\Gamma \left( \frac{v_1+p}{2}\right) \prod_{i=2}^{k} \Gamma \left( \frac{v_i}{2}\right)} \calB \left( \frac{v_k}{2(k-1)!} , \frac{\theta_k }{2(k-2)!}+1\right) \prod_{i=2}^{k-1} \calB\left( \frac{jv_j}{2k!}, \frac{(j-1)\theta_j}{2k!} +1\right). 
\end{equation}

Next we consider the integral in the numerator of \eqref{eq003}, namely
\[
 J:= \int_B |y_1|^{p-2} \left(  \frac12 \langle Cy^{(1)},y^{(1)} \rangle + \langle \eta, y^{(2)} \rangle \right) dy. 
 \]
We note that $\int_B  \langle \eta, y^{(2)}\rangle |y_1|^{p-2} = 0$, which follows by applying the change of variables $$\psi(y^{(1)}, y^{(2)},y^{(3)},\ldots, y^{(k)})=(y^{(1)}, - y^{(2)},y^{(3)},\ldots, y^{(k)}),$$ with $|J\psi|=1$ and $\psi(B)=B$, resulting in the value of the integral being invariant under multiplication by $-1$. Let us denote the coefficients of matrix $C$ as follows: $C=[c_{ij}]_{i,j=1,\ldots,v_1}$, then
\[
 2J = \underbrace{c_{11} \int_B |y_1|^{p}dy}_{J_1} + \underbrace{\sum_{i\ne j} c_{ij} \int_B |y_1|^{p-2} y_i y_j dy}_{J_2} + \underbrace{\sum_{i=2}^{v_1} c_{ii}\int_B |y_1|^{p-2}y_i^2 dy}_{J_3}.
 \]

Observe, that by the symmetry of $B$, every integral term of the sum $J_2$ vanishes. We will handle $J_1$ and $J_3$ analogously to $I$. First, for $i=2,\ldots,v_1$ we compute the following integrals 
\begin{align}
\int_{B_{v_1}(0,R_1)} |y_1|^{p-2}y_i^2 dy^{(1)} &=R_1^{v_1+p} \frac{\Gamma \left( \frac{p-1}{2}\right) \Gamma \left( \frac{3}{2}\right) \Gamma \left(\frac{1}{2} \right)^{v_1-2}}{\Gamma \left( \frac{p+v_1+2}{2}\right)} = R_1^{v_1+p}\frac{\sqrt{\pi}^{v_1-1}\Gamma \left(\frac{p-1}{2}\right)}{2 \Gamma \left( \frac{p+v_1+2}{2}\right)}, \label{key-lem-eq6c}
\end{align}
where again we use Lemma~\ref{lemdir} and the familiar property $\Gamma(1+s)=s\Gamma(s)$ with $s=\frac12$ (cf. computations at~\eqref{eq004}).

Notice that the calculations summarised in \eqref{I3} work for an arbitrary $p>1$. More precisely, the integrals $J_1$ and $J_3$ over the ball $B$, can be expressed in the same way as in \eqref{I}, the multiplicative constants arising from the computation of integrals~\eqref{I4} will be the same but with the exponents $\theta_j$ replaced by the exponents $\theta_j'$ defined by the following formula (cf. definition of $\theta_{j}$ in~\eqref{teta-key-lemma}):
\[
 \theta'_j = \frac{p+ \sum_{i=1}^{j-1} i v_i}{j-1}.
 \] 
Therefore, by using~\eqref{key-lem-eq6c} and calculations analogous to those between formula~\eqref{I4} and~\eqref{I3} we arrive at
\begin{align*}
J_3 &= \sum_{i=2}^{v_1} c_{ii} \frac{\sqrt{\pi}^{-1 + \sum_{j=1}^{k-1}v_j}\Gamma \left(\frac{p-1}{2}\right) (k-1)!}{2 (k!)^{k-1}\Gamma \left( \frac{p+v_1+2}{2}\right) \prod_{j=2}^{k-1} \Gamma \left( \frac{v_j}{2}\right)} \prod_{j=2}^{k-1} \calB\left( \frac{jv_j}{2k!}, \frac{(j-1)\theta'_j}{2k!} +1\right) \!\int \limits_{B_{v_k}(0,1)} \!(1-\norm{y^{(k)}}^{\frac{2k!}{k}})^{\frac{\theta'_k (k-1)}{2k!}} dy^{(k)} \\
& =\sum_{i=2}^{v_1} c_{ii} \frac{\sqrt{\pi}^{-1 + \sum_{j=1}^{k}v_j}\Gamma \left(\frac{p-1}{2}\right) }{2 (k!)^{k-1}\Gamma \left( \frac{p+v_1+2}{2}\right) \prod_{j=2}^{k} \Gamma \left( \frac{v_j}{2}\right)} \calB \left(\frac{v_k}{2(k-1)!},\frac{\theta_k'}{2(k-2)!}+1 \right) \prod_{j=2}^{k-1} \calB\left( \frac{jv_j}{2k!}, \frac{(j-1)\theta'_j}{2k!} +1\right). 
\end{align*}
Moreover, in order to compute $J_1$, we proceed computationally the same way we did for  for \eqref{I} with the power $p$ instead of $p-2$, and obtain~\eqref{I3} with $p$ now corresponding to $p+2$:
\[
J_1 = c_{11} \frac{\Gamma \left( \frac{p+1}{2}\right) \sqrt{\pi}^{-1 +\sum_{j=1}^{k} v_j} }{(k!)^{k-1}\Gamma \left( \frac{v_1+p+2}{2}\right) \prod_{j=2}^{k} \Gamma \left( \frac{v_j}{2}\right)} \calB \left( \frac{v_k}{2(k-1)!} , \frac{\theta'_k }{2(k-2)!}+1\right) \prod_{j=2}^{k-1} \calB\left( \frac{jv_j}{2k!}, \frac{(j-1)\theta'_j}{2k!} +1\right). 
\]
We collect the above calculations to arrive at
\begin{align*}
J =\frac{J_1+J_3}{2} &= \frac{ \sqrt{\pi}^{-1 +\sum_{j=1}^{k} v_j} }{2(k!)^{k-1}\Gamma \left( \frac{v_1+p+2}{2}\right) \prod_{j=2}^{k} \Gamma \left( \frac{v_j}{2}\right)} \calB \left( \frac{v_k}{2(k-1)!} , \frac{\theta'_k }{2(k-2)!}+1\right)  \\
& \times \left( c_{11}\Gamma \left( \frac{p+1}{2}\right) + \sum_{i=1}^{v_1} \frac{1}{2} c_{ii} \Gamma \left( \frac{p-1}{2}\right)  \right)\prod_{j=2}^{k-1} \calB\left( \frac{jv_j}{2k!}, \frac{(j-1)\theta'_j}{2k!} +1\right)  \\
&= \frac{\Gamma \left( \frac{p-1}{2}\right) \sqrt{\pi}^{-1 +\sum_{j=1}^{k} v_j} }{4(k!)^{k-1}\Gamma \left( \frac{v_1+p+2}{2}\right) \prod_{j=2}^{k} \Gamma \left( \frac{v_j}{2}\right)} \calB \left( \frac{v_k}{2(k-1)!} , \frac{\theta'_k }{2(k-2)!}+1\right) \\
& \times  \left( c_{11} (p-1) + \sum_{i=2}^{v_1} c_{ii} \right)  \prod_{j=2}^{k-1} \calB\left( \frac{jv_j}{2k!}, \frac{(j-1)\theta'_j}{2k!} +1\right),
\end{align*}
where we again use the familiar property of the $\Gamma$ function as in~\eqref{key-lem-eq6c}. It now follows that  
\begin{align*}%\label{eq005}
%\begin{split}
\gamma_0 & =\frac{J}{I}= \frac{ \Gamma \left( \frac{p+v_1}{2}\right)}{4 \Gamma \left( \frac{p+2+v_1}{2}\right)}  \frac{ \calB \left( \frac{v_k}{2(k-1)!} , \frac{\theta'_k }{2(k-2)!}+1\right) }{\calB \left( \frac{v_k}{2(k-1)!} , \frac{\theta_k }{2(k-2)!}+1\right)}  \left( c_{11} (p-1) + \sum_{i=2}^{v_1} c_{ii} \right)\prod_{j=2}^{k-1} \frac{\calB\left( \frac{jv_j}{2k!}, \frac{(j-1)\theta'_j}{2k!} +1\right)}{\calB\left( \frac{jv_j}{2k!}, \frac{(j-1)\theta_j}{2k!} +1\right) } \nonumber\\
&= \frac{ 1}{2 (p+v_1)}  \frac{ \calB \left( \frac{v_k}{2(k-1)!} , \frac{\theta'_k }{2(k-2)!}+1\right) }{\calB \left( \frac{v_k}{2(k-1)!} , \frac{\theta_k }{2(k-2)!}+1\right)}  \prod_{j=2}^{k-1} \frac{\calB\left( \frac{jv_j}{2k!}, \frac{(j-1)\theta'_j}{2k!} +1\right)}{\calB\left( \frac{jv_j}{2k!}, \frac{(j-1)\theta_j}{2k!} +1\right) } \left( c_{11} (p-1) + \sum_{i=2}^{v_1} c_{ii} \right) \nonumber \\
& = c(p,v_1,\ldots,v_k) \cdot \left( c_{11} (p-1) + \sum_{i=2}^{v_1} c_{ii} \right),
%\end{split}
\end{align*}
where the constant $c(p,v_1,\ldots,v_k)$ is defined with the above equality (see also Remark~\ref{rem-2.15} and Examples~\ref{sect3-ex2}-\ref{sect3-ex4} in Section 3 for further discussion about this constant).

In order to arrive at assertion \eqref{main-thm-assert-cc}, we express the constants $c_{11}$ and ${\rm tr} (C)$ in terms of the matrix $A$ and the vector $\xi$. Recall that $C=R^T A R$ and $R^T \xi = e_1$, which imply that
\[
c_{11} = \langle C e_1,e_1 \rangle = \langle C R^T \xi , R^T \xi \rangle = \langle R (R^T A R) R^T \xi,\xi \rangle = \langle A \xi ,\xi \rangle,
\]
moreover, the orthogonality of $R$ implies that ${\rm tr} (C)= {\rm tr} (R^T A R) = {\rm tr} (A)$. Therefore, we can conclude that
\[
\gamma_0 = c(p,v_1,\ldots,v_k) ( \langle A \xi, \xi \rangle (p-2) + {\rm tr} (A)),
\]
which upon substituting $\xi$ with $\xi/|\xi|$, proves the assertion \eqref{main-thm-assert-cc}.

We now consider the second assertion of the lemma, namely the asymptotic formula~\eqref{main-thm-assert2-cc} for $\mu_p(\ep,u)$ and $u\in C^2(\Om)$. %Let $1\leq p\leq \infty$ and $\Om \subset \Hei$ be open and $x\in \Om$. 
%\begin{proof}
  Suppose $\ep>0$ is chosen so that $\overline{B_{\ep}(x)}\subset \Om$. Consider the function $q(y)$ as in \eqref{lem-def-q-poly-cc},  with
 \[
  q(x)=u(x),\quad \xi=\nabla_{V_1} u(x),\quad A=\nabla_{V_1}^2 u(x),\quad {\rm and} \quad \eta =2 \nabla_{V_2} u (x).
 \]
Notice that with this notation (and by the assumption $\xi\not=0$), it holds that
\[
  \Delta^N_{p, \G}u(x)=\text{tr}(A)+(p-2) \frac{\langle A\xi,\xi \rangle}{|\xi|^2}.
\]
 Set $u_{\ep}(z)=u(x\delta_\varepsilon(z))$ and $q_{\ep}(z)=q(x\delta_\varepsilon(z))$. Since $u\in C^2(\Om)$, it follows that for all $t>0$, there exists $\ep(t)>0$ such that for every $z\in \overline{B}$ and all $\ep\in(0,\ep(t))$ it holds $|u_{\ep}(z)-q_{\ep}(z)|<t \ep^2$.  Furthermore, by claims (4) and (5) of Theorem~\ref{prop} we have $\mu_p(\ep, q\pm t \ep^2)(x)=\mu_p(\ep, q)(x)\pm t \ep^2$. These observations together with Corollary~\ref{cor} and Part (3) of Theorem~\ref{prop} allow us to obtain the following estimates:
\[
\frac{ \mu_p(\ep, q)-u(x)}{\ep^2}-t \leq \frac{ \mu_p(\ep, u)-u(x)}{\ep^2} \leq \frac{ \mu_p(\ep, q)-u(x)}{\ep^2}+t.
\]
Applying \eqref{main-thm-assert-cc} we obtain
\begin{align*}
c(p,v_1,\ldots,v_k)  \Delta^N_{p, \G}u(x)-t &\leq \liminf_{\ep\to 0}  \frac{ \mu_p(\ep, q)-u(x)}{\ep^2} \nonumber \\
&\leq \limsup_{\ep \to 0}\frac{ \mu_p(\ep, q)-u(x)}{\ep^2} \leq  c(p,v_1,\ldots,v_k)  \Delta^N_{p, \G}u(x)+t,
 \end{align*}
which implies the assertion~\eqref{main-thm-assert2-cc} for $1<p<\infty$.

\subsection{Case 2: $p=\infty$.}

We need to demonstrate that the expression
\begin{align*}%\label{mm02}
 \gamma_\varepsilon &= \frac{\mu_\infty (\varepsilon,q)-q(x)}{\varepsilon^2} \nonumber \\
 & = \frac{1}{2\varepsilon} \bigg( \min_{y \in \overline{B}} \left[ \langle \xi ,y^{(1)} \rangle + \varepsilon \left( \langle \eta , y^{(2)} \rangle + \frac12 \langle Ay^{(1)},y^{(1)} \rangle \right) \right] \nonumber  \\
 & + \max_{y \in \overline{B}} \left[ \langle \xi ,y^{(1)} \rangle + \varepsilon \left( \langle \eta , y^{(2)} \rangle + \frac12 \langle Ay^{(1)},y^{(1)} \rangle \right) \right] \bigg)
\end{align*}
has a limit as $\ep \to 0$.

Let us define a function $g: \bbG \rightarrow \mathbb{R}$ by setting $g(y) = \langle \xi ,y^{(1)} \rangle + \langle \eta, y^{(2)} \rangle + \frac12 \langle Ay^{(1)},y^{(1)} \rangle $. Observe further, that the change of variables  $ y =\delta_{1/\varepsilon} (z)$ implies the following equalities:
\[
 \min_{y \in  \overline{B_1(0)}} \left[ \langle \xi ,y^{(1)} \rangle + \varepsilon \left( \langle \eta , y^{(2)} \rangle + \frac12 \langle Ay^{(1)},y^{(1)} \rangle \right) \right] = \frac{1}{\varepsilon} \min_{z \in \overline{B_\varepsilon(0)}} g(z),
  \]
and
\[
 \max_{y \in  \overline{B_1(0)}} \left[ \langle \xi ,y^{(1)} \rangle + \varepsilon \left( \langle \eta , y^{(2)} \rangle + \frac12 \langle Ay^{(1)},y^{(1)} \rangle \right) \right] = \frac{1}{\varepsilon} \max_{z \in \overline{B_\varepsilon(0)}} g(z), 
 \]
and it follows that
$$ \gamma_\ep=\frac{1}{2\varepsilon^2} \left( \min_{z \in \overline{B_\varepsilon(0)}} g(z) + \max_{z \in \overline{B_\varepsilon(0)}} g(z) \right).$$
 
Next we note that $\nabla_{V_1} g(0) = \xi \ne 0$, thus we can apply Lemma 1.5 and 1.6 in \cite{FP}, and affirm that for all small enough $\varepsilon$, there exist points $P_{\varepsilon,M}=(y^{(1)}_{\varepsilon,M},\ldots,y^{(k)}_{\varepsilon,M})$ and $P_{\varepsilon,m}=(y^{(1)}_{\varepsilon,m},\ldots, y^{(k)}_{\varepsilon,m})$ in $\partial B_\varepsilon(0)$ with the following properties:
\[
 \max_{\overline{B_\varepsilon(0)}} g  = g(P_{\varepsilon,M}) \qquad {\rm and} \qquad  \min_{\overline{B_\varepsilon(0)}} g  = g(P_{\varepsilon,m}).
 \]In terms of the expression we have the following estimate 
\begin{equation}\label{e0224}
\frac{1}{2\varepsilon^2}  \left( g(P_{\varepsilon,m}) + g(-P_{\varepsilon,m}) \right) \, \leq \, \,  \gamma_\ep \, \,  \leq  \, \frac{1}{2\varepsilon^2} \left( g(P_{\varepsilon,M}) + g(-P_{\varepsilon,M}) \right).
\end{equation} 
Moreover, by applying again \cite[Lemma 1.6]{FP}, we have that
\[
\lim_{\varepsilon \to 0} \frac{y^{(1)}_{\varepsilon,M}}{\varepsilon} = \frac{\xi}{|\xi|} \qquad {\rm and} \qquad  \lim_{\varepsilon \to 0}  \frac{y^{(1)}_{\varepsilon,m}}{\varepsilon} = -\frac{\xi}{|\xi|}, 
\] 
which implies
\begin{align*}
\frac{1}{2 \varepsilon^2} ( g(P_{\varepsilon,M}) + g(-P_{\varepsilon,M})) &= \frac{1}{4\varepsilon^2} \left( \langle A y^{(1)}_{\varepsilon,M},y^{(1)}_{\varepsilon,M} \rangle + \langle A -y^{(1)}_{\varepsilon,M},-y^{(1)}_{\varepsilon,M}\rangle \right) \\
& = \frac{1}{2} \langle A \frac{y^{(1)}_{\varepsilon,M}}{\varepsilon},\frac{y^{(1)}_{\varepsilon,M}}{\varepsilon}\rangle \xrightarrow{\varepsilon \to 0} \frac{1}{2} \frac{\langle A \xi , \xi \rangle }{|\xi|^2}.
\end{align*}
We treat the left-hand side of \eqref{e0224} similarly to conclude that
\[
 \mu_\infty(\varepsilon,q)=q(x) + \frac{\varepsilon^2}{2} \frac{\langle A \xi , \xi \rangle}{|\xi|^2} + o(\varepsilon^2).
\]
Upon repeating the reasoning similar to the one for $\Delta^N_{p, \G}$, we obtain that asymptotic formula~\eqref{main-thm-assert2-cc} holds for $\Delta^N_{\infty, \G}$ as well. Thus, the proof of Lemma~\ref{lem-main-cc} is completed for all $1<p\leq \infty$.
\end{proof}

We are now in  position to prove Theorem~\ref{main-thm}.

\begin{proof}[The proof of Theorem~\ref{main-thm}]
  Let $B(x)\subset \Om$ be ball and let us fix $u \in C^0(\Omega)$ and $\phi\in C^2(B(x))$ with $\nabla_{V_1}\phi(x)\not=0$. The asymptotic formula~\eqref{main-thm-assert2-cc} implies that
  \begin{equation}\label{e25}
   \phi(x)=\mu_p(\varepsilon, \phi)(x)-c(p, v_1,\ldots, v_k) \Delta_{p, \bbG}^N \phi(x)\varepsilon^2+o(\varepsilon^2),\quad\hbox{as}\quad \varepsilon\to 0.
  \end{equation}
Suppose that $u$ is a viscosity solution, in the sense of Definition~\ref{def25}, to the equation $\Delta_{p, \bbG}^N u = 0$ in $\Omega$. Thus, in particular, $u$ satisfies parts (i) and (ii) of Definition~\ref{def25}. Since $u$ is a viscosity supersolution of $\Delta_{p, \bbG}^N=0$ in $\Om$, then at point $x$, for $\phi$ as above such that $u - \phi$ has a strict minimum at $x$ and $u(x)=\phi(x)$, it holds that $\Delta_{p, \bbG}^N \phi(x) \leq 0$. Therefore, from \eqref{e25} we obtain
\[
 \phi(x) \geq \mu_p(\varepsilon, u)(x)+o(\varepsilon^2),\quad\hbox{as}\quad \varepsilon\to 0, 
\]
which proves that $\phi$ at $x$ satisfies part (i) of Definition~\ref{inequal_viscosity_definition}. By using the fact that $u$ is also a viscosity subsolution (and so $u$ satisfies part (ii) of Definition~\ref{def25}) we show that inequality in part (ii) of Definition \ref{inequal_viscosity_definition} holds as well. This proves that $u(x)=\mu_p(\varepsilon,u)(x)+o(\varepsilon^2)$ as $\varepsilon \to 0$ in the viscosity sense.

Now we will prove the converse. Suppose, that $u(x)=\mu_p(\varepsilon,u)(x)+ o(\varepsilon^2)$ as $\varepsilon \to 0$ in the viscosity sense. If $u-\phi$ attains a strict minimum at $x$, then by Definition~\ref{inequal_viscosity_definition}, it follows that
 $\phi(x) \geq \mu_p(\varepsilon,\phi)(x) + o(\varepsilon^2)$ as $\varepsilon \to 0$. Using this result in \eqref{e25}, we get
\[
 \Delta_{p, \bbG}^N \phi (x)  = \frac{\mu_p(\varepsilon,\phi)(x) - \phi(x) }{c(p, v_1,\ldots, v_k) \varepsilon^2} + o(1) \leq o(1),
 \]
as $\varepsilon \to 0$, and hence $\Delta_{p, \bbG}^N \phi(x) \leq 0$. We apply a similar reasoning in the case $u-\phi$ has a strict maximum at $x$. This proves, that $u$ is a viscosity solution of $\Delta_{p, \bbG}^N u=0$ in $\Omega$.
\end{proof}

We close this section with a remark of Theorem~\ref{main-thm} in the case $p=1$. 
%In order to illustrate the problem mentioned in the introduction, we restrict our presentation to the Euclidean case of $\R^2$, see the corresponding part of the proof of Lemma 3.1 in~\cite{Mag}. In this setting, the Euclidean counterpart of the polynomial $q$ in~\eqref{lem-def-q-poly-cc} for $y\in B_{\ep}(x)$ and $\xi\in\R^2\setminus\{0\}$ and a symmetric matrix $A\in M_{2\times 2}(\R)$ reads as: $q(y)=q(x)+\langle \xi, y-x \rangle_{\R^2} +\frac12 \langle A(y-x), y-x \rangle_{\R^2}$. Furthermore, let $\mu_1(\epsilon, q)$ be the unique root of the following equation (see the discussion in \cite[formula (2.1)]{Mag} and \cite[Theorem 2.1]{Mag}):
%\begin{equation}\label{ex5-eq}
% \left|\{y\in B_{\ep}(x): q(y)>\mu_1(\epsilon, q)\} \right|= \left|\{y\in B_{\ep}(x): q(y)<\mu_1(\epsilon, q)\} \right|.
%\end{equation}

%The following  shows that already in the planar Euclidean case for $p=1$ we cannot expect to obtain Lemma~\ref{lem-main-cc}, and hence also Theorem~\ref{main-thm}.
\begin{rem}\label{ex-fail}
  The techniques used in the proof of \cite[Lemma 3.1]{Mag} cannot be easily adapted to obtain Theorem~\ref{main-thm} for $p=1$. Indeed, the Implicit Function Theorem employed on pg. 11 in~\cite{Mag} for an ellipsoid in $\R^n$ and $f_{\ep}$, cannot be used directly already in the setting of the Heisenberg group $\Hei$. The noncommutativity of the group operation in $\Hei$ together with the formula for the Koranyi--Reimann distance result in the singular set within the ellipsoid in $\Hei$ and prevent us from using the Implicit Function Theorem. The alternative approaches lead to difficulties of computational nature.
% Let us consider polynomial $q$ such that, upon applying the change of variables $z:=Ry$ for a rotation matrix $R$  with a property that $R^t\xi=|\xi|e_1$, we get a matrix $C=R^TAR$ such that $C$ is the identity matrix $I_2$. Set $\eta_{\ep}=\frac{1}{\ep^2}(\mu_1(\ep, q)(x)-q(x))$ for $\ep>0$, then \eqref{ex5-eq} becomes:
%\begin{equation}\label{ex5-eq2}
% \left|\{y\in B_{1}(0): |\xi|y_1+\frac{\ep}{2}\langle Cy, y\rangle >\ep \eta_{\ep})\} \right|=  \left|\{y\in B_{1}(0): |\xi|y_1+\frac{\ep}{2}\langle Cy, y\rangle <\ep \eta_{\ep})\} \right|.
%\end{equation}
% Take $\xi$ such that $|\xi|=1$.  Then \eqref{ex5-eq2} says that the measure of the part of the unit disc where 
%\[
% \frac{x}{\varepsilon} + \frac12 ( x^2 + y^2 ) < \eta_\varepsilon, 
%\]
% is the same as the measure of the part where 
% \[
% \frac{x}{\varepsilon} + \frac12 ( x^2 + y^2 ) > \eta_\varepsilon. 
% \]
% Therefore, these two parts are divided by a curve on which $\frac{x}{\varepsilon} + \frac12 ( x^2 + y^2 ) = \eta_\varepsilon$. Equivalently, this equation reads as $y^2 = 2 \eta_\varepsilon - \frac{ 2}{\varepsilon} x - x^2$,
% where $x \in (-1,1)$. Hence if we evaluate at $x = \frac{1}{2}$, we get that $0\leq y^2 =2 \eta_\varepsilon - \frac{1}{\varepsilon} - \frac{1}{4}$, and it follows that
% \[
%  \eta_\varepsilon \geq \frac{1}{2\varepsilon} + \frac{1}{8}. 
% \]
% Thus $\eta_\varepsilon \to \infty$ as $\varepsilon \to 0$ and so obtaining $\eta_0$ (corresponding to $\gamma_0$ in~\cite{Mag}) is impossible via this approach. 
\end{rem}

\textbf{Acknowledgements:}
T. Adamowicz and B. Warhurst were supported by a grant of National Science Centre, UMO-2017/25/B/ST1/01955. A. Pinamonti is partially supported by the University of Trento and GNAMPA of INDAM.
The authors would like to thank Professor Rolando Magnanini for some fruitful discussions on \cite{Mag}.

\end{document}